\numberwithin{equation}{section}
\theoremstyle{plain}
\newtheorem{theo}{Theorem}[section]
\newtheorem{lemm}[theo]{Lemma}
\newtheorem{coro}[theo]{Corollary}
\newtheorem{prop}[theo]{Proposition}
\theoremstyle{definition}
\newtheorem{rema}{Remark}[section]
\newtheorem{exam}{Example}[section]
\def\C{\mathbb C}
\def\Z{\mathbb Z}
\DeclareMathOperator{\Hess}{Hess}
\DeclareMathOperator{\flag}{Fl}
\DeclareMathOperator{\Poin}{Poin}
\def\Sn{\mathfrak{S}_n}
\definecolor{gray7}{gray}{0.7}
\def\L{\textsf{\upshape L}}
\def\J{\bot}
\def\D{\Lambda}
\begin{document}
\title[The second cohomology]{The second cohomology of regular semisimple Hessenberg varieties from GKM theory}

\author[A. Ayzenberg]{Anton Ayzenberg}
\address{Faculty of computer science, National Research University Higher School of Economics, Moscow, Russian Federation}
\email{ayzenberga@gmail.com}

\author[M. Masuda]{Mikiya Masuda}
\address{Osaka City University Advanced Mathematical Institute, Sumiyoshi-ku, Osaka 558-8585, Japan.}
\email{masuda@sci.osaka-cu.ac.jp}

\author[T. Sato]{Takashi Sato}
\address{Research Institute for Mathematical Sciences, Kyoto University, Kyoto 606-8502, Japan, and Osaka City University Advanced Mathematical Institute, Sumiyoshi-ku, Osaka 558-8585, Japan.}
\email{00tkshst00@gmail.com}

\date{\today}

\keywords{Hessenberg variety, torus action, GKM theory, equivariant cohomology, permutation module}

\subjclass[2020]{Primary: 57S12, Secondary: 14M15}

\begin{abstract}
We describe the second cohomology of a regular semisimple Hessenberg variety by generators and relations explicitly in terms of GKM theory.
The cohomology of a regular semisimple Hessenberg variety becomes a module of a symmetric group $\Sn$ by the dot action introduced by Tymoczko.
As an application of our explicit description, we give a formula describing the isomorphism class of the second cohomology as an $\Sn$-module.
Our formula is not exactly the same as the known formula by Chow or Cho-Hong-Lee but they are equivalent.
We also discuss its higher degree generalization.
\end{abstract}

\maketitle

\setcounter{tocdepth}{1}

\section{Introduction} \label{sect:1}
Let $\flag(n)$ denote the variety of all complete flags in $\C^n$.
A regular semisimple Hessenberg variety $\Hess(S,h)$ is a smooth subvariety of $\flag(n)$.
It is determined by a square matrix $S$ of size $n$ with distinct eigenvalues and a function $h$ (called a {\it Hessenberg function}) from the set of integers $[n]=\{1,\dots,n\}$ to itself satisfying the condition:
\[
	h(1)\le h(2)\le \cdots \le h(n)\qquad\text{and}\qquad h(j)\ge j \quad (\forall j\in [n]).
\]
The topology of $\Hess(S,h)$ depends only on $h$, i.e.~ does not depend on the choice of $S$.
The maximal $\C^*$-torus $T$ in the general linear group ${\rm GL}_n(\C)$, which commutes with $S$, naturally acts on $\Hess(S,h)$.
Using this $T$-action, one can study the cohomology $H^*(\Hess(S,h))$.
Specifically, Tymoczko (\cite{tymo08}) constructed an $\Sn$-action (called the {\it dot action}) on $H^*(\Hess(S,h))$ making it an $\Sn$-module.

A theorem of Brosnan-Chow \cite{br-ch} (the solution of Shareshian-Wachs conjecture \cite{sh-wa}) says that the graded $\Sn$-module $H^*(\Hess(S,h))$ is equivalent to the (graded) chromatic symmetric function $X_{G_h}(\mathbf{x},t)$ of a graph $G_h$ associated to $h$,
where $X_{G_h}(\mathbf{x}, t)$ is a polynomial in $t$ with symmetric functions in infinitely many variables $\mathbf{x}=(x_1,x_2,\cdots)$ as coefficients.
Moreover, it is shown in \cite{guay} that the Stanley-Stembridge conjecture on the $e$-positivity of $(3+1)$-incomparability graphs is reduced to showing the $e$-positivity of the graph $G_h$ for any Hessenberg function $h$.
Thus, we are led to the study of $H^*(\Hess(S,h))$.

In this paper, we investigate the second cohomology $H^2(\Hess(S,h))$ using GKM theory \cite{go-ko-ma} when $\Hess(S,h)$ is connected.
We exhibit its generators explicitly in terms of GKM theory and give a formula describing the isomorphism class of the $\Sn$-module in terms of $h$.
Prior to our work, Chow \cite{chow21} gave a formula for the coefficient of $t$ in $X_{G_h}(\mathbf{x},t)$ using $P$-tableaux.
Through the theorem by Brosnan-Chow mentioned above, Chow's formula is equivalent to ours.
After Chow's work, Cho-Hong-Lee \cite{CHL21} exhibited generators of $H^2(\Hess(S,h))$ geometrically using the Bialynicki-Birula decomposition of $\Hess(S,h)$ and gave a formula describing the isomorphism class of the $\Sn$-module $H^2(\Hess(S,h))$.
Their formula is also equivalent to ours.
However, the methods are different and the relation between their generators of $H^2(\Hess(S,h))$ and ours is unclear.

Since we obtain explicit generators of $H^2(\Hess(S,h))$ in this paper, we are able to characterize Hessenberg functions $h$ for which the whole cohomology ring $H^*(\Hess(S,h))$ is generated in degree two as a graded ring.
It turns out that the graph $G_h$ associated to such $h$ is what is called a (double) lollipop (\cite{da-wi}, \cite{hu-na-yo}).
We will discuss this subject in a forthcoming paper \cite{AMS2}.

As mentioned above, we describe $H^2(\Hess(S,h))$ in terms of explicit generators and relations when $\Hess(S,h)$ is connected.
It is known that $\Hess(S,h)$ is connected, in other words, the  restriction map $\iota^*\colon H^0(\flag(n))\to H^0(\Hess(S,h))$ is an isomorphism if and only if $h(j)\ge j+1$ for any $j\in [n-1]$.
As a generalization of this setting, we consider the case where $h(j)\ge j+d$ for any $j\in [n-d]$, where $d$ is an integer $\ge 2$.
In this case, we show that the retriction map $\iota^*\colon H^{2p}(\flag(n))\to H^{2p}(\Hess(S,h))$ is an isomorphism for $p<d$ and describe $H^{2d}(\Hess(S,h))$ in terms of explicit generators and relations.

The organization of the paper is as follows.
In Section~\ref{sect:2}, we review necessary facts on regular semisimple Hessenberg varieties and labeled graphs associated to them.
Section~\ref{sect:3} discusses an inductive formula to compute the Poincar\'e polynomial of $\Hess(S,h)$ and a formula on the second Betti number of $\Hess(S,h)$ in terms of $h$.
In Section~\ref{sect:4}, we provide three types of elements in the $T$-equivariant cohomology $H_T^*(\Hess(S,h))$ using GKM theory, discuss relations among them, and observe the dot action of $\Sn$ on them.
We prove that these three types of elements together with $H^2(BT)$ generate $H^2_T(\Hess(S,h))$.
In Section~\ref{sect:5}, we explicitly describe $H^2(\Hess(S,h))$ in terms of generators and relations, and give a formula describing the isomorphism class of the $\Sn$-module $H^2(\Hess(S,h))$ in terms of $h$.
In Section~\ref{sect:6} we discuss the generalization of the result on the second cohomology to the higher degree cohomology mentioned above.

\section{Regular semisimple Hessenberg varieties} \label{sect:2}

\subsection{Hessenberg variety}
The flag variety $\flag(n)$ is defined as the set of nested linear subspaces of $\C^n$:
\[
	\flag(n) = \{ V_\bullet=(V_1 \subset V_2 \subset \cdots \subset V_n = \C^n) \mid \dim_{\C} V_i = i \quad \forall i\in [n]\}.
\]
Given a square matrix $A$ of size $n$ and a function $h\colon [n]\to [n]$ (called a \textit{Hessenberg function}) satisfying
\[
	h(1)\le h(2)\le \cdots\le h(n)\qquad\text{and}\qquad h(j)\ge j\quad (\forall j\in [n]),
\]
the Hessenberg variety $\Hess(A,h)$ is defined by
\[
	\Hess(A,h):=\{V_\bullet\in \flag(n)\mid A(V_j)\subset V_{h(j)}\quad\text{for $\forall j\in [n]$}\},
\]
where the matrix $A$ is regarded as a linear transformation on $\C^n$.
We often express the Hessenberg function $h$ as a vector $(h(1),\dots,h(n))$ by listing the values of $h$.
When $h=(n,\dots,n)$, it is obvious from the definition that $\Hess(A,h)$ is the flag variety $\flag(n)$ regardless of the choice of $A$.

As illustrated in the following example, we can visualize a Hessenberg function $h$ by drawing a configuration of the shaded boxes on a square grid of size $n\times n$, which consists of boxes in the $i$-th row and the $j$-th column satisfying $i\le h(j)$.
Since it is assumed that $j\le h(j)$ for any $j\in [n]$, the essential part is the shaded boxes below the diagonal.

\begin{exam}\label{example:HessenbergFunction}
Let $n=5$.
The Hessenberg function $h=(3,3,4,5,5)$ corresponds to the configuration of the shaded boxes drawn on the left grid and the essential shaded boxes (i.e.~ below the diagonal) are drawn on the right grid in Figure $\ref{pic:stair-shape}$.
\end{exam}

\begin{figure}[H]
\begin{center}
\setlength{\unitlength}{5mm}
\begin{picture}(14,6)(0,-1)
	\multiput(0,4.2)(0,-1){3}{\colorbox{gray7}{\phantom{\vrule width 3mm height 3mm}}}
	\multiput(1,4.2)(0,-1){3}{\colorbox{gray7}{\phantom{\vrule width 3mm height 3mm}}}
	\multiput(2,4.2)(0,-1){4}{\colorbox{gray7}{\phantom{\vrule width 3mm height 3mm}}}
	\multiput(3,4.2)(0,-1){5}{\colorbox{gray7}{\phantom{\vrule width 3mm height 3mm}}}
	\multiput(4,4.2)(0,-1){5}{\colorbox{gray7}{\phantom{\vrule width 3mm height 3mm}}}
	\linethickness{0.3mm}
	\multiput(1,0)(1,0){4}{\line(0,1){5}}
	\multiput(0,1)(0,1){4}{\line(1,0){5}}
	\put(0,0){\framebox(5,5)}
	\multiput(9,3.2)(0,-1){2}{\colorbox{gray7}{\phantom{\vrule width 3mm height 3mm}}}
	\multiput(10,2.2)(0,-1){1}{\colorbox{gray7}{\phantom{\vrule width 3mm height 3mm}}}
	\multiput(11,1.2)(0,-1){1}{\colorbox{gray7}{\phantom{\vrule width 3mm height 3mm}}}
	\multiput(12,0.2)(0,-1){1}{\colorbox{gray7}{\phantom{\vrule width 3mm height 3mm}}}
	\linethickness{0.3mm}
	\multiput(10,0)(1,0){4}{\line(0,1){5}}
	\multiput(9,1)(0,1){4}{\line(1,0){5}}
	\put(9,0){\framebox(5,5)}
\end{picture}
\end{center}
\caption{The configuration corresponding to $h=(3,3,4,5,5)$.}
\label{pic:stair-shape}
\end{figure}
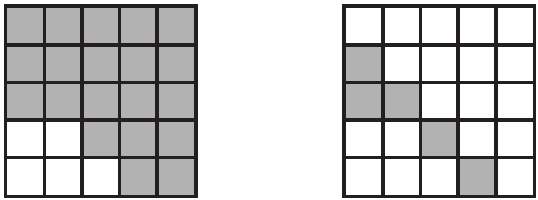


The Hessenberg variety $\Hess(S,h)$ for a square matrix $S$ of size $n$ with distinct eigenvalues is called \textit{regular semisimple}.
\begin{theo}[\cite{ma-pr-sh}] \label{theo:MPS}\hfill
\begin{enumerate}
\item $\Hess(S,h)$ is smooth.
\item $\dim_\C\Hess(S,h)=\sum_{j=1}^n(h(j)-j)$.
\item $\Hess(S,h)$ is connected if and only if $h(j)\ge j+1$ for $\forall j\in [n-1]$.
\item $H^{odd}(\Hess(S,h))=0$ and the $2k$-th Betti number of $\Hess(S,h)$ is given by
\[
	\#\{w\in \mathfrak{S}_n\mid \ell_h(w)=k\}
\]
where
\begin{equation}\label{eqDhw}
	\ell_h(w)=\#\{ 1\le j<i\le n\mid w(j)>w(i),\ i\le h(j)\}.
\end{equation}
\end{enumerate}
\end{theo}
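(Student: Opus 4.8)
The plan is to put the whole statement on the shoulders of the torus action and a Bialynicki--Birula (BB) paving. After conjugating I may assume $S=\mathrm{diag}(s_1,\dots,s_n)$ with the $s_i$ pairwise distinct, so that the diagonal torus $T\subset\mathrm{GL}_n(\C)$ acts on $\Hess(S,h)$. Because $S$ is diagonal, each coordinate flag $V^w_\bullet$ (with $V^w_j=\langle e_{w(1)},\dots,e_{w(j)}\rangle$, $w\in\Sn$) satisfies $S(V^w_j)=V^w_j\subset V^w_{h(j)}$, so the $T$-fixed locus of $\Hess(S,h)$ consists of exactly these $n!$ points, the same as for $\flag(n)$.

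For parts (1) and (2) I would compute the tangent space. At a fixed point $V^w_\bullet$ the space $T_{V^w_\bullet}\flag(n)$ decomposes into one-dimensional $T$-weight spaces, the direction indexed by a pair $j<i$ carrying weight $t_{w(i)}-t_{w(j)}$; linearizing the conditions $S(V_j)\subset V_{h(j)}$ shows that the directions surviving in $T_{V^w_\bullet}\Hess(S,h)$ are exactly those with $i\le h(j)$. Hence $\dim_\C T_{V^w_\bullet}\Hess(S,h)=\#\{(j,i):j<i\le h(j)\}=\sum_{j=1}^n(h(j)-j)$ for every $w$. The role of regular semisimplicity is to make this count uniform: since $\mathrm{ad}(S)=[S,-]$ is diagonalizable with eigenvalues $s_p-s_q$, the linearized conditions have locally constant rank, so the same dimension is attained at an arbitrary $V_\bullet\in\Hess(S,h)$, not only at the fixed points. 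A scheme whose tangent space has constant dimension $N:=\sum_j(h(j)-j)$ at every point is smooth of pure dimension $N$, giving (1) and (2). This constancy away from the fixed locus is the main obstacle: it is the precise place where distinctness of the eigenvalues of $S$ is needed, and it is what the whole argument hinges on.

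For part (4) I would run the BB decomposition on the (now smooth, projective) variety $\Hess(S,h)$ using a generic one-parameter subgroup $\lambda(t)=\mathrm{diag}(t^{a_1},\dots,t^{a_n})$ with $a_1>\dots>a_n$, whose fixed points coincide with those of $T$. Smoothness guarantees that each attracting cell $C_w=\{x:\lim_{t\to0}\lambda(t)x=V^w_\bullet\}$ is an affine space, of dimension equal to the number of attracting tangent weights at $V^w_\bullet$. Among the surviving directions (pairs $j<i\le h(j)$) the one indexed by $(j,i)$ has weight $t_{w(i)}-t_{w(j)}$, which for this choice of $\lambda$ is attracting precisely when $w(j)>w(i)$; hence $\dim_\C C_w=\ell_h(w)$. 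The cells $\{C_w\}_{w\in\Sn}$ form an affine paving, so $H^{\mathrm{odd}}(\Hess(S,h))=0$ and the $2k$-th Betti number equals the number of $k$-dimensional cells, namely $\#\{w\in\Sn:\ell_h(w)=k\}$.

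Part (3) then falls out of (4), since connectedness is equivalent to $b_0=\#\{w:\ell_h(w)=0\}=1$. If $h(j)\ge j+1$ for all $j\in[n-1]$, then each $(j,j+1)$ is a legal constraint pair, so $\ell_h(w)=0$ forces $w(1)<\dots<w(n)$, i.e.\ $w=e$, giving $b_0=1$. Conversely, if $h(j_0)=j_0$ for some $j_0\in[n-1]$, then no constraint pair $(j,i)$ can straddle the gap $j\le j_0<i$, since it would require $i\le h(j)\le h(j_0)=j_0$; hence the permutation sending $\{1,\dots,j_0\}$ to the top values $\{n-j_0+1,\dots,n\}$ and $\{j_0+1,\dots,n\}$ to $\{1,\dots,n-j_0\}$, each increasingly, is a second element with $\ell_h=0$, so $b_0\ge2$ and $\Hess(S,h)$ is disconnected. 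The only delicate bookkeeping that remains is fixing the sign of $\lambda$ so that attracting directions match the inversions counted by $\ell_h(w)$, which is routine once the smoothness step is settled.
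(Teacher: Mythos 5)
The paper itself does not prove Theorem~\ref{theo:MPS}: it imports it from \cite{ma-pr-sh}, so your attempt must be measured against that classical argument, which your outline follows in spirit (diagonalize $S$, identify the $T$-fixed points with $\Sn$, read off tangent weights, run a Bialynicki--Birula decomposition). Granting smoothness, your parts (3) and (4) are sound: the attracting-cell dimension count $\dim_\C C_w=\ell_h(w)$, the paving property (Bialynicki--Birula filterability for smooth projective varieties), the identification $b_0=\#\{w\mid \ell_h(w)=0\}$, and the two-block permutation witnessing disconnectedness when $h(j_0)=j_0$ are all correct.

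The genuine gap is in parts (1)--(2), exactly where you admit ``the whole argument hinges.'' The claim that diagonalizability of $\mathrm{ad}(S)$ forces the linearized conditions to have ``locally constant rank'' on $\Hess(S,h)$ is a non sequitur: $\mathrm{ad}(S)$ being semisimple on $\mathfrak{gl}_n(\C)$ says nothing about the rank of the linearization of $S(V_j)\subset V_{h(j)}$ at a non-fixed point, and that constancy \emph{is} the content of smoothness, not a consequence of a spectral fact about $S$ alone. (Moreover, constant tangent-space dimension does not by itself give smoothness: a fat point $\mathrm{Spec}\,\C[x]/(x^2)$ has constant tangent dimension $1$; one also needs reducedness or a matching lower bound on dimension.) The standard way to close the gap --- essentially the proof in \cite{ma-pr-sh} --- is Lie-theoretic. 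Write $\flag(n)=G/B$ with $G={\rm GL}_n(\C)$, and let $H=\{M\in\mathfrak{gl}_n(\C)\mid M_{ij}=0 \text{ for } i>h(j)\}$, a $B$-submodule of $\mathfrak g=\mathfrak{gl}_n(\C)$ containing $\mathfrak b$. Then $\Hess(S,h)$ is the zero locus of the section $gB\mapsto \mathrm{Ad}(g^{-1})S \bmod H$ of the vector bundle $G\times_B(\mathfrak g/H)$, and at a zero, with $z=\mathrm{Ad}(g^{-1})S\in H$, the vertical differential is $Y\mapsto [z,Y]\colon \mathfrak g/\mathfrak b\to \mathfrak g/H$. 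Smoothness (of the expected codimension) at $gB$ is therefore equivalent to $[z,\mathfrak g]+H=\mathfrak g$. If this failed, nondegeneracy of the trace form would produce $u\neq0$ orthogonal to both $[z,\mathfrak g]$ and $H$; orthogonality to $[z,\mathfrak g]$ gives $u\in\mathfrak g_z$, while $u\in H^{\perp}\subset\mathfrak b^{\perp}=\mathfrak n$ is nilpotent. Since $z$ is regular semisimple, $\mathfrak g_z$ is a Cartan subalgebra and contains no nonzero nilpotent --- a contradiction. This is the precise point where distinctness of the eigenvalues enters, and it yields (1) and (2) simultaneously, since then $\dim_\C\Hess(S,h)=\dim\mathfrak g/\mathfrak b-\dim\mathfrak g/H=\sum_{j=1}^n(h(j)-j)$; with smoothness in hand, the rest of your argument goes through.
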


Since $S$ commutes with a maximal torus $T$ of ${\rm GL}_n(\C)$, the restricted action of $T$ on $\flag(n)$ leaves $\Hess(S,h)$ invariant.
One sees that
\begin{equation} \label{eq:Hess(S,h)_fixed}
	\Hess(S,h)^T=\flag(n)^T=\Sn.
\end{equation}

\subsection{Equivariant cohomology}
We shall briefly review equivariant cohomology.
For a $T$-space $X$, the equivariant cohomology $H^*_T(X)$ is defined as
\[
	H^*_T(X):=H^*(ET\times_T X)
\]
where $ET\to BT$ is the universal principal $T$-bundle and $ET\times_T X$ is the orbit space of $ET\times X$ by the diagonal $T$-action.
Since $T$ is isomorphic to $(\C^*)^n$, $BT$ is homeomorphic to $(\C P^\infty)^n$ and hence $H^*(BT)$ is a polynomial ring in $n$ elements of $H^2(BT)$ which form a generator of $H^2(BT)$.
Note that for a one-point space $pt$, we have
\[
	H^*_T(pt)=H^*(BT).
\]
Since the $T$-action on $ET$ is free, the projection $ET\times X\to ET$ on the first factor induces a fibration
\[
	X\xrightarrow{\iota} ET\times_T X\xrightarrow{\pi} ET/T=BT.
\]
The equivariant cohomology $H^*_T(X)$ is not only a ring but also an algebra over $H^*(BT)$ through $\pi^*\colon H^*(BT)\to H^*_T(X)$.
As easily seen, the restriction map $\iota^*\colon H^*_T(X)\to H^*(X)$ sends $H^2(BT)$ to zero.
Therefore, it induces a ring homomorphism
\begin{equation} \label{eq:iota*}
	H^*_T(X)/(H^2(BT))\to H^*(X),
\end{equation}
where $(H^2(BT))$ denotes the ideal generated by $\pi^*(H^2(BT))$.
If $H^{odd}(X)=0$ (this is the case when $X=\Hess(S,h)$), then the map \eqref{eq:iota*} above is an isomorphism.

\subsection{GKM theory and labeled graph}
We choose and fix a set of generators of $H^2(BT)$ and denote them by $t_1,\dots,t_n$, they correspond to the choice of coordinates in the ambient space $\C^n$ of flags.
Since $H^{odd}(\Hess(S,h))=0$, the torus action is cohomologically equivariantly formal~\cite{go-ko-ma}.
Therefore the restriction map to the $T$-fixed point set
\begin{align*}
	H^*_T(\Hess(S,h))\to H^*_T(\Hess(S,h)^T)
	&=\bigoplus_{w\in \Sn}H^*_T(w) =\bigoplus_{w\in \Sn}\Z[t_1,\dots,t_n]\\
	&={\rm Map}(\Sn,\Z[t_1,\dots,t_n])
\end{align*}
is injective, where ${\rm Map}(P,Q)$ denotes the set of all maps from $P$ to $Q$.
Since the restriction map above is injective, we think of $H^*_T(\Hess(S,h))$ as a subset of ${\rm Map}(\Sn,\Z[t_1,\dots,t_n])$.

\begin{prop}[\cite{tymo08}] \label{prop:GKM}
An element $f\in {\rm Map}(\Sn,\Z[t_1,\dots,t_n])$ is in $H^*_T(\Hess(S,h))$ if and only if
\[
	f(v)\equiv f(w) \pmod{t_{w(i)}-t_{w(j)}}\quad \text{whenever $v=w\cdot(i,j)$ for $j<i\le h(j)$}
\]
where $(i,j)$ denotes the transposition interchanging $i$ and $j$.
\end{prop}

To a Hessenberg function $h$, one associates a graph $(V,E)$ with a label on the edge set $E$
\[
	\alpha\colon E\to H^2(BT)\backslash\{0\}
\]
where
\begin{enumerate}
	\item $V=\Sn$,
	\item $E=\{ \{v,w\} \mid v,w\in V,\ v=w\cdot(i,j)\text{ for some } j<i\le h(j)\}$,
	\item $\alpha(\{v,w\})=t_{w(i)}-t_{w(j)}$ up to sign for $v = w\cdot(i,j)$.
\end{enumerate}
We denote the triple $(V,E,\alpha)$ by $\Gamma(h)$ and call a \textit{labeled graph} associated to $h$.
This is a slight variant of the notion of GKM graph.
The set of elements in ${\rm Map}(\Sn,\Z[t_1,\dots,t_n])$ satisfying the congruence relation in Proposition~\ref{prop:GKM} is sometimes called the \textit{graph cohomology} of $\Gamma(h)$.
Proposition~\ref{prop:GKM} says that $H^*_T(\Hess(S,h))$ agrees with the graph cohomology of $\Gamma(h)$.
In particular, $H^*_T(\Hess(S,h))$ is independent of the choice of the regular semisimple matrix $S$.
Notice, that even the equivariant diffeomorphism type of $\Hess(S,h)$ is independent of $S$, see~\cite{Abe_Horiguchi:2019}.

We often think of $t_i$ as an element of ${\rm Map}(\Sn,\Z[t_1,\dots,t_n])$ by regarding it as a constant map.
Obviously, $t_i$ satisfies the congruence relation in Proposition~\ref{prop:GKM}, so it is in $H^2_T(\Hess(S,h))$.
Then
\[
	H^*(\Hess(S,h))=H^*_T(\Hess(S,h))/(t_1,\dots,t_n)
\]
because $H^{odd}(\Hess(S,h))=0$, where $(t_1,\dots,t_n)$ is the ideal generated by $t_1,\dots,t_n$.

\begin{exam} \label{exam:GKM_graph}
Let $n=3$.
For $h=(2,3,3)$ and $h'=(3,3,3)$, the corresponding labeled graphs $\Gamma(h)$ and $\Gamma(h')$ are depicted in Figure $\ref{pic:GKM graphs}$ where we use the one-line notation for each vertex.

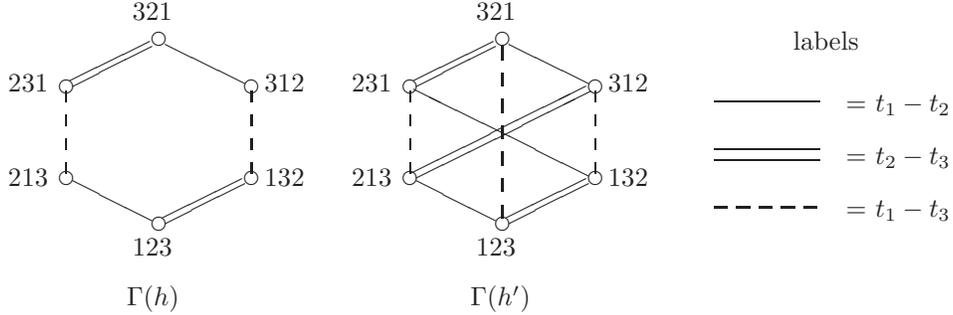
\begin{figure}[H]
\begin{center}
\begin{picture}(400,90)
\put(60,14){\circle{5}}
\put(60,84){\circle{5}}
\put(95,31){\circle{5}}
\put(95,66){\circle{5}}
\put(25,31){\circle{5}}
\put(25,66){\circle{5}}

\put(57.5,15){\line(-2,1){30}}
\put(92.5,67){\line(-2,1){30}}
\put(61.5,16){\line(2,1){30}}
\put(62.5,14){\line(2,1){30}}
\put(26.5,68){\line(2,1){30}}
\put(27.5,66){\line(2,1){30}}

\multiput(25,33)(0,9){4}{\line(0,1){4}}
\multiput(95,33)(0,9){4}{\line(0,1){4}}

\put(50,2){$123$}
\put(50,91){$321$}
\put(100,28){$132$}
\put(100,64){$312$}
\put(3,28){$213$}
\put(3,64){$231$}

\put(48,-17){$\Gamma(h)$}

\put(190,14){\circle{5}}
\put(190,84){\circle{5}}
\put(225,31){\circle{5}}
\put(225,66){\circle{5}}
\put(155,31){\circle{5}}
\put(155,66){\circle{5}}

\put(222.5,32){\line(-2,1){66}}
\put(187.5,15){\line(-2,1){30}}
\put(222.5,67){\line(-2,1){30}}
\put(155.5,33){\line(2,1){66}}
\put(157.5,31){\line(2,1){66}}
\put(191.5,16){\line(2,1){30}}
\put(192.5,14){\line(2,1){30}}
\put(156.5,68){\line(2,1){30}}
\put(157.5,66){\line(2,1){30}}
\multiput(190,16)(0,10){7}{\line(0,1){5}}
\multiput(155,33)(0,9){4}{\line(0,1){4}}
\multiput(225,33)(0,9){4}{\line(0,1){4}}

\put(180,2){$123$}
\put(180,91){$321$}
\put(230,28){$132$}
\put(230,64){$312$}
\put(133,28){$213$}
\put(133,64){$231$}

\put(178,-17){$\Gamma(h')$}

\put(300,80){{\rm labels}}
\put(270,60){\line(1,0){40}}
\put(320,56.5){= $t_1-t_2$}
\put(270,42){\line(1,0){40}}
\put(270,38){\line(1,0){40}}
\put(320,36.5){= $t_2-t_3$}
\multiput(270,20)(8.5,0){5}{\line(1,0){5}}
\put(320,16.5){= $t_1-t_3$}
\end{picture}
\end{center}
\vspace{15pt}
\caption{The labeled graphs $\Gamma(h)$ and $\Gamma(h')$}
\label{pic:GKM graphs}
\end{figure}
\end{exam}

Both graphs $\Gamma(h)$ and $\Gamma(h')$ in Example~\ref{exam:GKM_graph} are connected.
In general, it is not difficult to see that $\Gamma(h)$ is connected if and only if $h(j)\ge j+1$ for any $j\in [n-1]$.

\section{The second Betti number of \texorpdfstring{$\Hess(S,h)$}{Hess(S,h)}}
\label{sect:3}

In this section, we give an inductive formula to compute the Poincar\'e polynomial of $\Hess(S,h)$ using Theorem~\ref{theo:MPS}(4)
and apply it to obtain an explicit formula of the second Betti number of $\Hess(S,h)$ in terms of the Hessenberg function $h$.

\subsection{An inductive formula}
For a space $X$ such that $H^{odd}(X)=0$ and the rank of $H^*(X)$ over $\Z$ is finite, we define
\[
	\Poin(X,\sqrt{q}):=\sum_{r=0}^\infty b_{2r}(X)q^r
\]
where $b_{2r}(X)$ denotes the $2r$-th Betti number over $\Z$.
Let $h^j$ be the Hessenberg function obtained by removing all the boxes in the $j$-th row and all the boxes in the $j$-th column (See Figure \ref{pic:h^j}).
To be precise,
\[
h^j(i) =
\begin{cases}
h(i) & (i < j,\ h(i)<j)\\
h(i)-1 & (i < j,\ h(i) \geq j)\\
h(i+1) -1 & (i\geq j)
\end{cases}
\]
\begin{figure}[H]
\begin{center}
\setlength{\unitlength}{5mm}
\begin{picture}(24,8)(-3,-1)
	\linethickness{0.3mm}
	\put(-4,3.3){$j$-th row $\rightarrow$}
	\put(1.35,5.5){$\downarrow$}
	\put(0.5,6.3){$j$-th column}
	\put(2.3,-1.2){$h$}
	\multiput(0,4.2)(0,-1){3}{\colorbox{gray7}{\phantom{\vrule width 3mm height 3mm}}}
	\multiput(1,4.2)(0,-1){3}{\colorbox{gray7}{\phantom{\vrule width 3mm height 3mm}}}
	\multiput(2,4.2)(0,-1){4}{\colorbox{gray7}{\phantom{\vrule width 3mm height 3mm}}}
	\multiput(3,4.2)(0,-1){5}{\colorbox{gray7}{\phantom{\vrule width 3mm height 3mm}}}
	\multiput(4,4.2)(0,-1){5}{\colorbox{gray7}{\phantom{\vrule width 3mm height 3mm}}}
	\multiput(1,0)(1,0){4}{\line(0,1){5}}
	\multiput(0,1)(0,1){4}{\line(1,0){5}}
	\put(0,0){\framebox(5,5){}}
	\put(6.75,2.3){$\leadsto$}
	\put(5.9,1.7){remove}
	\put(10.15,4.3){$\leftarrow$}
	\put(10.15,3.3){$\nwarrow$}
	\put(9.3,3.3){$\uparrow$}
	\put(9,4.2){\colorbox{gray7}{\phantom{\vrule width 3mm height 3mm}}}
	\put(11,4.2){\colorbox{gray7}{\phantom{\vrule width 3mm height 3mm}}}
	\put(12,4.2){\colorbox{gray7}{\phantom{\vrule width 3mm height 3mm}}}
	\put(13,4.2){\colorbox{gray7}{\phantom{\vrule width 3mm height 3mm}}}
	\multiput(9,2.2)(0,-2){1}{\colorbox{gray7}{\phantom{\vrule width 3mm height 3mm}}}
	\multiput(11,2.2)(0,-1){2}{\colorbox{gray7}{\phantom{\vrule width 3mm height 3mm}}}
	\multiput(12,2.2)(0,-1){3}{\colorbox{gray7}{\phantom{\vrule width 3mm height 3mm}}}
	\multiput(13,2.2)(0,-1){3}{\colorbox{gray7}{\phantom{\vrule width 3mm height 3mm}}}
	\multiput(12,0)(1,0){2}{\line(0,1){3}}
	\multiput(12,4)(1,0){2}{\line(0,1){1}}
	\multiput(9,1)(0,1){2}{\line(1,0){1}}
	\multiput(11,1)(0,1){2}{\line(1,0){3}}
	\put(9,4){\framebox(1,1){}}
	\put(9,0){\framebox(1,3){}}
	\put(11,4){\framebox(3,1){}}
	\put(11,0){\framebox(3,3){}}
	\put(15.75,2.3){$\leadsto$}
	\put(19.8,-1.2){$h^j$}
	\multiput(18,4.2)(0,-1){2}{\colorbox{gray7}{\phantom{\vrule width 3mm height 3mm}}}
	\multiput(19,4.2)(0,-1){3}{\colorbox{gray7}{\phantom{\vrule width 3mm height 3mm}}}
	\multiput(20,4.2)(0,-1){4}{\colorbox{gray7}{\phantom{\vrule width 3mm height 3mm}}}
	\multiput(21,4.2)(0,-1){4}{\colorbox{gray7}{\phantom{\vrule width 3mm height 3mm}}}
	\multiput(19,1)(1,0){3}{\line(0,1){4}}
	\multiput(18,2)(0,1){3}{\line(1,0){4}}
	\put(18,1){\framebox(4,4)}
\end{picture}
\end{center}
\caption{The configuration corresponding to $h^j$.}
\label{pic:h^j}
\end{figure}
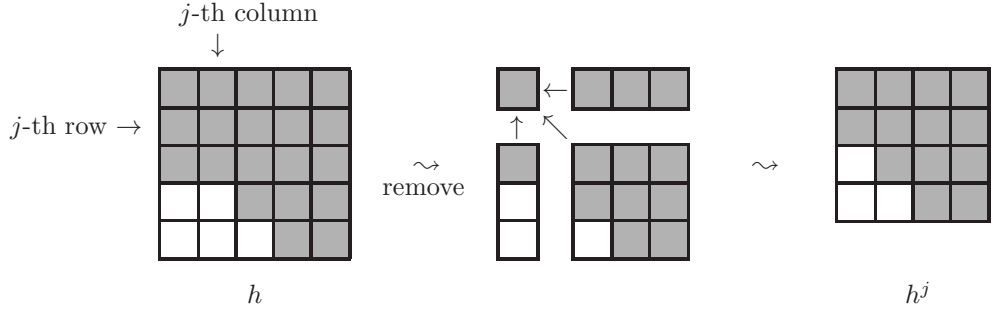

\begin{prop} \label{prop:inductive_formula}
Under the above understanding, we have
\[
	\Poin(\Hess(S,h),\sqrt{q})=\sum_{j=1}^nq^{h(j)-j}\Poin(\Hess(S',h^j),\sqrt{q})
\]
where $S'$ denotes a matrix of size $n-1$ with distinct eigenvalues.
\end{prop}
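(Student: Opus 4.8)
My plan is to reduce the statement to a purely combinatorial identity for permutation generating functions, and then prove that identity by stratifying $\Sn$ according to the position of the largest letter. By Theorem~\ref{theo:MPS}(4), the Poincar\'e polynomial is the length generating function
\[
	\Poin(\Hess(S,h),\sqrt q)=\sum_{w\in\Sn}q^{\ell_h(w)},
\]
and likewise $\Poin(\Hess(S',h^j),\sqrt q)=\sum_{w'\in\mathfrak{S}_{n-1}}q^{\ell_{h^j}(w')}$. Thus it suffices to establish
\[
	\sum_{w\in\Sn}q^{\ell_h(w)}=\sum_{j=1}^n q^{h(j)-j}\sum_{w'\in\mathfrak{S}_{n-1}}q^{\ell_{h^j}(w')}.
\]

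To this end I would stratify $\Sn$ by the position $j:=w^{-1}(n)$ of the value $n$. Deleting this position together with the value $n$, and keeping the relative order of the remaining letters, produces $w'\in\mathfrak{S}_{n-1}$ given by $w'(a)=w(a)$ for $a<j$ and $w'(a)=w(a+1)$ for $a\ge j$. The assignment $w\mapsto(j,w')$ is a bijection from $\Sn$ onto $\bigsqcup_{j=1}^n\mathfrak{S}_{n-1}$, since $w$ is recovered from $(j,w')$ by reinserting $n$ in position $j$. Under this bijection the claimed identity follows termwise once I show
\[
	\ell_h(w)=\bigl(h(j)-j\bigr)+\ell_{h^j}(w'),\qquad j=w^{-1}(n).
\]

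The first summand comes from the pairs in $\ell_h(w)$ that involve the distinguished position $j$. A pair $(j,i)$ with $j<i\le h(j)$ always satisfies $w(j)=n>w(i)$, so all $h(j)-j$ such values of $i$ contribute; a pair $(j',j)$ with $j'<j\le h(j')$ would require $w(j')>w(j)=n$ and so never contributes. This accounts for exactly $h(j)-j$ inversions. The remaining inversions range over pairs of positions different from $j$; since deleting the maximal value $n$ does not disturb the relative order of the other letters, the comparison $w(j')>w(i')$ is preserved under $w\mapsto w'$. It therefore remains to match the Hessenberg constraint $i'\le h(j')$ with the constraint $b\le h^j(a)$, where $a,b$ are the relabeled positions of $j',i'$ in $w'$.

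The main (and essentially only) technical point is this last verification, which I would carry out by splitting the pairs $(j',i')$ with $j',i'\ne j$ into the three cases $i'<j$, $\ j'<j<i'$, and $\ j<j'$, and comparing against the three cases in the definition of $h^j$. The relabeling leaves a position unchanged if it lies below $j$ and shifts it down by one if it lies above $j$, so $(a,b)$ equals $(j',i')$, $(j',i'-1)$, or $(j'-1,i'-1)$ respectively. In each case a short computation---using $h(j')\ge j'$ together with the location of $j$ relative to $h(j')$, and distinguishing the subcases $h(j')<j$ and $h(j')\ge j$ that already appear in the formula for $h^j$---shows that $i'\le h(j')$ holds if and only if $b\le h^j(a)$ holds. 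Granting this, multiplying $q^{\ell_h(w)}=q^{h(j)-j}\,q^{\ell_{h^j}(w')}$ and summing over the bijection $w\mapsto(j,w')$ gives the proposition. I expect the straddling case $j'<j<i'$ to be the only place demanding genuine care, since there the two positions are relabeled asymmetrically and the equivalence hinges on comparing $h(j')$ with $j$.
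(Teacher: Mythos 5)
Your proposal is correct and follows essentially the same route as the paper: the stratification by $j=w^{-1}(n)$ is exactly the paper's decomposition $\Sn=\Sn^1\sqcup\cdots\sqcup\Sn^n$ with $\Sn^j=\{w\mid w(j)=n\}$, and your key identity $\ell_h(w)=(h(j)-j)+\ell_{h^j}(w')$ is precisely the one the paper uses. The case analysis you outline for matching $i'\le h(j')$ with $b\le h^j(a)$ does go through (including the straddling case via the dichotomy $h(j')<j$ versus $h(j')\ge j$), so you are merely making explicit a verification the paper leaves implicit.
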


\begin{proof}
It follows from Theorem~\ref{theo:MPS}(4) that
\begin{equation} \label{eq:Poincare}
	\Poin(\Hess(S,h),\sqrt{q})=\sum_{w\in \Sn}q^{\ell_h(w)}
\end{equation}
where
\[
	\ell_h(w)=\#\{1\le j<i\le n\mid w(j)>w(i),\ i\le h(j)\}.
\]
For $j\in [n]$ we set
\[
	\Sn^j:=\{w\in \Sn\mid w(j)=n\}
\]
and consider the following decomposition of $\Sn$:
\[
	\Sn=\Sn^1\sqcup \Sn^2\sqcup \cdots \sqcup \Sn^n.
\]
If $w$ is in $\Sn^j$, then $w(j)=n$ so that $w(j)>w(i)$ for any $j<i\le h(j)$.
Therefore, if we denote by $w^j$ the permutation on $[n-1]$ obtained by removing $w(j)=n$ from $w$, then we have
\[
	\ell_h(w)=h(j)-j+\ell_{h^j}(w^j).
\]
This together with \eqref{eq:Poincare} implies the formula in the proposition.
\end{proof}

For a Hessenberg function $h$, we consider the following two sets:
\begin{equation} \label{eq:JL}
\begin{split}
	\J(h):&=\{ j\in [n-1]\mid h(j-1)=h(j)=j+1\}\\
	\L(h):&=\{ j\in [n-1] \mid h(j-1)=j\text{ and }h(j)=j+1\}
\end{split}
\end{equation}
where we understand $h(0)=1$.

\begin{lemm} \label{lemm:2_betti}
Suppose that $h(j)\ge j+1$ for $j\in[n-1]$.
Then the second Betti number $b_2(\Hess(S,h))$ of $\Hess(S,h)$ can be expressed as
\begin{equation*}
	b_2(\Hess(S,h))=\sum_{j\in \L(h)}\binom{n}{j}+(n-1)|\J(h)|-|\L(h)|.
\end{equation*}
\end{lemm}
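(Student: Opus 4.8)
The plan is to read $b_2$ off from Theorem~\ref{theo:MPS}(4): since $H^{odd}(\Hess(S,h))=0$, we have $b_2(\Hess(S,h))=\#\{w\in\Sn\mid \ell_h(w)=1\}$, so I must count permutations with exactly one $h$-inversion, where an $h$-inversion is a pair $(j,i)$ with $j<i\le h(j)$ and $w(j)>w(i)$.

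The first step is to show that the unique $h$-inversion of such a $w$ is adjacent, i.e.\ of the form $(j,j+1)$. If instead $(j,i)$ were an $h$-inversion with $i\ge j+2$, then $w(j)>w(i)$ forces a descent $w(k)>w(k+1)$ for some $j\le k<i$; as $k\le n-1$ we have $h(k)\ge k+1$, so $(k,k+1)$ is a second $h$-inversion, contradicting $\ell_h(w)=1$. Hence $w$ has a single descent, at $j$, is increasing on $[1,j]$ and on $[j+1,n]$, and is determined by the set $A=\{w(1),\dots,w(j)\}$ with $A\ne\{1,\dots,j\}$. Writing $N_j$ for the number of such $w$ that have no further $h$-inversion, we get $b_2=\sum_{j=1}^{n-1}N_j$.

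The second step computes $N_j$. With a single descent at $j$, the only $h$-inversions to rule out are the non-adjacent ones $(k,l)$ with $k\le j<l$ and $l\ge k+2$. Those with $k=j$ are excluded exactly when $h(j)=j+1$ or $\max A=j+1$; those with $k<j$ are controlled, via monotonicity of $h$, by the single value $h(j-1)$. Put $g(i):=h(i)-i$, so that $g(0)=1$, $g(i)\ge 1$ on $[n-1]$, $g(n-1)=1$ (because $h(n-1)=n$), and $g$ drops by at most $1$ at each step. A short bookkeeping then gives: $N_j=\binom{n}{j}-1$ for $j\in\L(h)$; $N_j=n-j$ for $j\in\J(h)$; $N_j=j$ when $g(j)\ge 2$ and $g(j-1)=1$; and $N_j=1$ when $g(j)\ge 2$ and $g(j-1)\ge 2$ --- where $\L(h)$ is the case $g(j-1)=g(j)=1$ and $\J(h)$ the case $g(j-1)=2,\ g(j)=1$. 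These four cases are exhaustive, since $g(j)=1$ forces $g(j-1)\le 2$.

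The final step is the summation. The $\L(h)$-terms give $\sum_{j\in\L(h)}\binom{n}{j}-|\L(h)|$, and the remaining terms sum to $\sum_{j\in\J(h)}(n-j)+\sum_{g(j)\ge 2,\,g(j-1)=1}j+\#\{j\mid g(j)\ge 2,\ g(j-1)\ge 2\}$; since $\sum_{j\in\J(h)}(n-j)+\sum_{j\in\J(h)}(j-1)=(n-1)|\J(h)|$, the lemma follows once I prove
\[
\sum_{g(j)\ge 2,\,g(j-1)=1} j\ +\ \#\{j\mid g(j)\ge 2,\ g(j-1)\ge 2\}\ =\ \sum_{j\in\J(h)}(j-1).
\]
I would prove this by decomposing $\{j\in[n-1]\mid g(j)\ge 2\}$ into maximal runs $[a,b]$. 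Because $g(0)=g(n-1)=1$, each run satisfies $1\le a\le b\le n-2$; its left end $a$ is the unique point of the run with $g(a-1)=1$ (a term of the first sum), its interior $a+1,\dots,b$ is exactly the counted set, and $g(b)=2$ with $g(b+1)=1$, so $b+1\in\J(h)$. This yields a bijection between runs and elements of $\J(h)$ under which a run contributes $a+(b-a)=b$ to the left-hand side and $(b+1)-1=b$ to the right-hand side, establishing the identity. The main obstacle is the case analysis for $N_j$ together with this run-decomposition identity; the reduction to adjacent inversions and the parametrization by $A$ are routine.
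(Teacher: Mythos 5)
Your proposal is correct, and it takes a genuinely different route from the paper's proof. The paper argues by induction on $n$: it specializes the inductive Poincar\'e polynomial formula (Proposition~\ref{prop:inductive_formula}) to degree one, inputs the number of connected components of $\Hess(S',h^j)$ from \cite{teff} (formula \eqref{eq:b0}), and tracks how $\J$ and $\L$ change under $h\mapsto h^n$ (formulas \eqref{eq:Jhn} and \eqref{eq:Lhn}) through a case analysis on $h(n-2)$ and $h(n-3)$. You instead enumerate $\{w\in\Sn\mid \ell_h(w)=1\}$ directly from Theorem~\ref{theo:MPS}(4). I checked the three ingredients of your count and they are sound: the reduction to a single descent (the adjacent inversion $(k,k+1)$ you produce is automatically distinct from $(j,i)$ since $i\ge j+2$); the four values $N_j=\binom{n}{j}-1$, $n-j$, $j$, $1$ (the $k=j$ constraint amounts to $\max A=j+1$ whenever $h(j)\ge j+2$, and the $k<j$ constraints reduce, by monotonicity of $h$ and of $w$ on $[1,j]$, to $A\setminus\{\max A\}=[j-1]$ whenever $h(j-1)\ge j+1$); and the run-decomposition identity for $g(j)=h(j)-j$, which works because $g(0)=g(n-1)=1$ and $g(j)\ge g(j-1)-1$, so each maximal run of $\{g\ge 2\}$ ends immediately before an element of $\J(h)$. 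Your approach buys self-containedness (no induction, no appeal to the component count of \cite{teff}) and finer information: it exhibits the contributing permutations themselves, and the resulting per-index counts --- $\binom{n}{j}-1$ for each $j\in\L(h)$ and $n-1$ in total for each $j\in\J(h)$ --- exactly anticipate the generators $\tau_A$ and $y_{j,k}$ modulo the single relations (R3) and (R2) of Section~\ref{sect:5}. What the paper's route buys is reusability: the inductive machinery (Proposition~\ref{prop:inductive_formula} together with the $h\mapsto h^j$ bookkeeping) is exactly what gets redeployed for the degree-$2d$ generalization in Section~\ref{sect:6} (Lemma~\ref{lemm:6-1}), where a direct count of $\{w\mid \ell_h(w)=d\}$ in your style would be considerably more delicate.
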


\begin{exam}
For the Hessenberg function $h=(3,3,4,5,5)$ in Example~\ref{example:HessenbergFunction}, we have
\[
	\J(h)=\{2\}\quad\text{and}\quad \L(h)=\{3,4\}.
\]
Therefore
\[
	b_2(\Hess(S,h))=\binom{5}{3}+\binom{5}{4}+(5-1)\cdot 1-2=17.
\]
\end{exam}

\begin{proof}[Proof of Lemma~\ref{lemm:2_betti}]
We prove the lemma by induction on $n$.
When $n=2$, $h=(2,2)$ since $h(j)\ge j+1$ for $j\in [n-1]$ by assumption.
Therefore, $\Hess(S,h)$ is $\C P^1$ while $\J(h)=\emptyset$ and $\L(h)=\{1\}$.
This shows that the lemma holds when $n=2$.

Suppose that $n\ge 3$.
It follows from Proposition~\ref{prop:inductive_formula} that
\begin{equation} \label{eq:b2h}
	b_2(\Hess(S,h))=\sum_{h(j)=j+1}b_0(\Hess(S',h^j))+b_2(\Hess(S',h^n)).
\end{equation}
Here, $\Hess(S',h^j)$ is connected if $h(j-1)= j+1$ and has $\binom{n-1}{j-1}$ many connected components if $h(j-1)=j$ (the detailed description of connected components can be found in~\cite{teff}).
This means that
\begin{equation} \label{eq:b0}
	b_0(\Hess(S',h^j))=
	\begin{cases}
		1\quad&(\text{if }j\in \J(h))\\
		\binom{n-1}{j-1}\quad& (\text{if } j\in \L(h))\\
	\end{cases}
\end{equation}
since $h(j)=j+1$ for $j$ in the sum of \eqref{eq:b2h}.

On the other hand, by the induction assumption, we have
\begin{equation} \label{eq:b2hn}
	b_2(\Hess(S',h^n))=\sum_{j\in \L(h^n)}\binom{n-1}{j}+(n-2)|\J(h^n)|-|\L(h^n)|.
\end{equation}
Here, by looking at the $(n-2,n-1)$ and $(n-1,n)$ boxes in the configuration associated to $h$, one sees that
\begin{equation} \label{eq:Jhn}
	\J(h^n)=
	\begin{cases}
		\J(h)\ &(\text{if }h(n-2)=n-1)\\
		\J(h)\backslash\{n-1\}\ &(\text{if }h(n-2)=n,\ h(n-3)=n-2)\\
		\left(\J(h)\backslash\{n-1\}\right)\cup\{n-2\}\ &(\text{if } h(n-2)=n,\ h(n-3)\ge n-1)
	\end{cases}
\end{equation}
\begin{equation} \label{eq:Lhn}
	\L(h^n)=
	\begin{cases}
		\L(h)\backslash\{n-1\}\quad&(\text{if }h(n-2)=n-1)\\
		\L(h)\cup\{n-2\}\quad&(\text{if }h(n-2)=n,\ h(n-3)=n-2)\\
		\L(h)\quad&(\text{if }h(n-2)=n,\ h(n-3)\ge n-1).
	\end{cases}
\end{equation}

We consider three cases according to the cases above and plug \eqref{eq:Jhn} and \eqref{eq:Lhn} in the right hand side of \eqref{eq:b2hn} in each case.
Then, this together with \eqref{eq:b0} will show that the right hand side of \eqref{eq:b2h} agrees with the right hand side of the identity in the lemma.
For instance, when $h(n-2)=n-1$, it follows from \eqref{eq:b0}, \eqref{eq:b2hn}, \eqref{eq:Jhn}, and \eqref{eq:Lhn} that the right hand side of \eqref{eq:b2h} turns into
\begin{align*}
	&\sum_{j\in \L(h)}\binom{n-1}{j-1}+|\J(h)|\\
	&+\sum_{j\in \L(h)\backslash\{n-1\}}\binom{n-1}{j}+(n-2)|\J(h)|-|\L(h)\backslash\{n-1\}|\\
	=& \sum_{j\in \L(h)}\left(\binom{n-1}{j-1}+\binom{n-1}{j}\right)-\binom{n-1}{n-1}+(n-1)|\J(h)|-|\L(h)|+1\\
	=&\sum_{j\in \L(h)}\binom{n}{j}+(n-1)|\J(h)|-|\L(h)|,
\end{align*}
which coincides with the right hand side of the identity in the lemma.
The other two cases can be proved similarly, so we omit the proof.
\end{proof}

\section{Generators of \texorpdfstring{$H^2(\Hess(S,h))$}{H2(Hess(S,h))}} \label{sect:4}

\subsection{Three types of elements in $H^2_T(\Hess(S,h))$}
The following three types of elements play a role in our argument.

\begin{lemm} \label{lemm:xytau}
The elements $x_i, y_{j,k}, \tau_A$ in ${\rm Map}(\Sn,\Z[t_1,\dots,t_n])$ defined below are in $H^2_T(\Hess(S,h))$.
\begin{enumerate}
	\item For $i\in [n]$, $x_i(w):=t_{w(i)}$.
	\item For $j\in \J(h)$ and $k\in [n]$,
	\[
		y_{j,k}(w):=
		\begin{cases}
			t_k-t_{w(j+1)}\quad &(\text{if } k\in \{w(1),\dots,w(j)\})\\
			0\quad &(\text{otherwise}).
		\end{cases}
	\]
	\item For $A\subset [n]$ with $|A| \in \L(h)$,
	\begin{equation*} \label{eq:tauA}
		\tau_A(w):=
		\begin{cases}
			t_{w(j)}-t_{w(j+1)}\quad &(\text{if }\{w(1),\dots,w(j)\}=A)\\
			0 \quad &(\text{otherwise}).
		\end{cases}
	\end{equation*}
\end{enumerate}
\end{lemm}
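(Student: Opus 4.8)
The plan is to verify directly, for each of the three families, the GKM congruence of Proposition~\ref{prop:GKM}. Every value of $x_i$, $y_{j,k}$ and $\tau_A$ is either $0$ or a linear form of degree one in the $t$'s, so each element lands in $H^2_T$ for degree reasons automatically; the only content is the congruence condition. Concretely, for each element $f$ and each edge $v=w\cdot(a,b)$ with $b<a\le h(b)$ I must show that $f(v)-f(w)$ is divisible by the label $t_{w(a)}-t_{w(b)}$. Throughout I will use that $v(a)=w(b)$, $v(b)=w(a)$, and $v=w$ on all other positions, and that reducing modulo the label amounts to identifying $t_{w(a)}\equiv t_{w(b)}$.

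For $x_i$ this is immediate: $x_i(v)=x_i(w)$ unless $i\in\{a,b\}$, and in the two remaining cases $x_i(v)-x_i(w)=\pm(t_{w(a)}-t_{w(b)})$ is exactly the label. For $y_{j,k}$ and $\tau_A$ the strategy is to observe that each function depends on $w$ only through the head set $S_j(w):=\{w(1),\dots,w(j)\}$ (with $j=|A|$ in the case of $\tau_A$) together with the values $w(j)$ and $w(j+1)$. I would therefore classify the edges $(a,b)$ by the positions of $a,b$ relative to the window $\{1,\dots,j\}$ and the distinguished positions $j,j+1$: whenever $a$ and $b$ lie on the same side of the cut between $j$ and $j+1$ and avoid the relevant value-positions, $f$ is unchanged and the congruence is trivial. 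The essential point is that the membership hypotheses sharply restrict the boundary-crossing edges. For $j\in\J(h)$ one has $h(j)=j+1$, so any cut-crossing edge (with $b\le j<a$) satisfies $a\le h(b)\le h(j)=j+1$, forcing $a=j+1$. For $|A|=j\in\L(h)$ one moreover has $h(j-1)=j$, so any edge with $b\le j-1$ stays inside the window ($a\le h(b)\le h(j-1)=j$), and the only cut-crossing edge is the adjacent swap $(j+1,j)$. This is exactly where the definitions of $\J(h)$ and $\L(h)$ enter.

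The nontrivial edges then form a short explicit list. For $y_{j,k}$ they are the swaps $(j+1,b)$ with $b\le j$ and the swaps $(a,j+1)$ with $a\ge j+2$; for $\tau_A$ they are $(j,b)$ with $b\le j-1$, the adjacent swap $(j+1,j)$, and $(a,j+1)$ with $a\ge j+2$. For each I would compute $f(v)-f(w)$ and check divisibility by the label, splitting on whether $k$ (respectively the head set) lies in the part of the data that changes. In the simple edges only one of the two pieces of data moves, so $f(v)-f(w)$ is visibly $0$ or $\pm(t_{w(a)}-t_{w(b)})$.

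I expect the main obstacle to be the single edge that simultaneously moves an element across the cut and alters the subtracted variable: for $y_{j,k}$ this is the swap $(j+1,b)$ with $b\le j$, and for $\tau_A$ the adjacent swap $(j+1,j)$. There the head set $S_j$ and the distinguished value both change at once, so one must split into the several sub-cases according to whether $k$ (resp.\ the head set $A$) equals or avoids the swapped elements $w(b),w(j+1)$ (resp.\ $w(j),w(j+1)$), and check in each sub-case that the difference collapses, modulo $t_{w(a)}-t_{w(b)}$, to a multiple of the label. Once these adjacent swaps are settled, the remaining cases are routine bookkeeping, and the lemma follows from Proposition~\ref{prop:GKM}.
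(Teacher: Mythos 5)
Your proposal is correct and follows essentially the same route as the paper: both verify the GKM congruence of Proposition~\ref{prop:GKM} edge by edge, using $h(j)=j+1$ (for $\J(h)$) and $h(j-1)=j$ (for $\L(h)$) to reduce to a short list of edges crossing the cut between positions $j$ and $j+1$, and then checking divisibility by the label in each sub-case. The only difference is cosmetic: the paper delegates the check for $y_{j,k}$ to \cite[Lemma~10.2]{AHHM} and writes out only the $\tau_A$ case, whereas you carry out the $y_{j,k}$ verification directly, which is equally valid and self-contained.
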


\begin{proof}
What we have to do is to check that the elements in the lemma satisfy the congruence relation in Proposition~\ref{prop:GKM}.
The check for $x_i$ is straightforward and that for $y_{j,k}$ is done in \cite[Lemma 10.2]{AHHM} in a more general setting.
Therefore, we shall check the congruence relation for $\tau_A$.
We set $j = |A|$.

Suppose that $v=w\cdot(r,s)$ for $r<s\le h(r)$.
We consider three cases.

Case 1.
The case where $r=j$.
In this case, $s=j+1$ because $h(r)=h(j)=j+1$, so that $t_{w(r)}-t_{w(s)}=t_{w(j)}-t_{w(j+1)}$.
Therefore, looking at the definition of $\tau_A$, we see that the congruence relation is obviously satisfied in this case.

Case 2.
The case where $r\le j-1$.
In this case, $s\le j$ since $s\le h(r)\le h(j-1)=j$ where $h(j-1)=j$ is because $j\in \L(h)$.
Therefore, $\{w(1),\dots,w(j)\}=\{v(1),\dots,v(j)\}$ and hence $\tau_A(w)=\tau_A(v)$, thereby the congruence relation is trivially satisfied.

Case 3.
The case where $r\ge j+1$.
In this case, $s\ge j+2$ as $s>r$.
Therefore, $\{w(1),\dots,w(j)\}=\{v(1),\dots,v(j)\}$ in this case, too.
Hence the congruence relation is satisfied.
\end{proof}

\begin{rema} \label{rema:4-1}
(1) The elements $x_i$ lie in $H^2_T(\Hess(S,h))$ for any Hessenberg function.
Indeed, when $h=(n,\dots,n)$, $\Hess(S,h)$ is the flag variety $\flag(n)$ and $x_i$ is the equivariant first Chern class of the line bundle $E_i/E_{i-1}$ where $E_i$ is the $i$-th tautological vector bundle over the flag variety $\flag(n)$:
\[
	E_i:=\{(V_\bullet,v) \in \flag(n)\times \C^n \mid v\in V_i\}.
\]

(2) The element $y_{j,k}$ exists in $H^2_T(\Hess(S,h))$ for any $j$ with $h(j)=j+1$ by the same definition as above but for our purpose it suffices to consider $y_{j,k}$ for $j\in \J(h)$.
\end{rema}

\begin{lemm} \label{lemm:relation}
For $x_i, y_{j,k}, \tau_A$ in Lemma~\ref{lemm:xytau}, the following holds.
\begin{enumerate}
\item $\sum_{i=1}^n x_i=\sum_{i=1}^n t_i$.
\item $\sum_{k=1}^ny_{j,k}=\sum_{i=1}^j i(x_i-x_{i+1})=x_1+\cdots+x_j-jx_{j+1}$.
\item $\sum_{|A|=j}\tau_A=x_j-x_{j+1}$.
\item Let $m$ be the maximum element in $\J(h)$ if $\J(h)\not=\emptyset$ and $0$ otherwise.
Then
\[
	y_{m,k}+\sum_{k\in A,\, m<|A|\le n-1}\tau_A=t_k-x_n\qquad \text{for any $k\in [n]$}
\]
where we understand $y_{0,k}=0$ and the sum is $0$ when $m=n-1$.
\end{enumerate}
\end{lemm}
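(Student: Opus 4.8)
The plan is to verify the identity \emph{pointwise}. Since $y_{m,k}$, each $\tau_A$, and $t_k-x_n$ all lie in $\mathrm{Map}(\Sn,\Z[t_1,\dots,t_n])$, it suffices to fix an arbitrary $w\in\Sn$ and compare the values of the two sides in $\Z[t_1,\dots,t_n]$. First I would simplify the left-hand sum. For a fixed $w$ and each $j$ with $m<j\le n-1$, the definition of $\tau_A$ (with $|A|=j$) forces $\tau_A(w)=0$ unless $A=\{w(1),\dots,w(j)\}$, in which case $\tau_A(w)=t_{w(j)}-t_{w(j+1)}$; the extra constraint $k\in A$ then amounts to the condition $k\in\{w(1),\dots,w(j)\}$. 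Setting $p:=w^{-1}(k)$, so that $w(p)=k$, this condition reads $j\ge p$, and therefore
\[
	\sum_{k\in A,\,m<|A|\le n-1}\tau_A(w)=\sum_{\substack{m<j\le n-1\\ j\ge p}}\bigl(t_{w(j)}-t_{w(j+1)}\bigr).
\]

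The key step is that this is a \emph{telescoping} sum, and the main case distinction is whether $p\le m$ or $p\ge m+1$. If $p\le m$ (which requires $m\ge 1$), then every index $j\in\{m+1,\dots,n-1\}$ satisfies $j\ge p$, so the sum telescopes to $t_{w(m+1)}-t_{w(n)}$; moreover $k=w(p)\in\{w(1),\dots,w(m)\}$, so by definition $y_{m,k}(w)=t_k-t_{w(m+1)}$, and adding the two pieces gives $t_k-t_{w(n)}$, which is exactly $(t_k-x_n)(w)$. If instead $p\ge m+1$, then $k\notin\{w(1),\dots,w(m)\}$, hence $y_{m,k}(w)=0$; this case also absorbs $m=0$ through the convention $y_{0,k}=0$. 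The surviving range is now $p\le j\le n-1$, whose telescoping sum equals $t_{w(p)}-t_{w(n)}=t_k-t_{w(n)}$, again matching the right-hand side.

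The only remaining checks are the boundary situations, which I would fold into the second case. When $p=n$ the range $p\le j\le n-1$ is empty, so the sum is $0$, while the right-hand side $t_k-t_{w(n)}$ also vanishes since $k=w(n)$. The degenerate case $m=n-1$ is likewise consistent with the stated convention: the prescribed sum is empty, and the identity reduces to $y_{n-1,k}(w)=t_k-t_{w(n)}$, which holds directly from the definition of $y_{n-1,k}$ upon distinguishing $k=w(n)$ from $k\ne w(n)$. I do not expect any genuine obstacle here: the entire content of the statement is the telescoping cancellation, and the only care needed is in translating the constraint $k\in A$ into the index bound $j\ge p$ and in handling the conventions at $m=0$ and $m=n-1$.
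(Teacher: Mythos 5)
Your proposal is correct and is essentially the paper's own proof: the paper also verifies identity (4) by evaluating both sides at each $w\in\Sn$ and telescoping the sum of the $\tau_A$'s, and its three-case split according to whether $k$ lies in $\{w(1),\dots,w(m)\}$, lies in $\{w(m+1),\dots,w(n-1)\}$, or equals $w(n)$ is just a repackaging of your two cases on $p=w^{-1}(k)$ together with the boundary case $p=n$. Identities (1)--(3), which you do not discuss, are likewise dismissed by the paper as straightforward pointwise checks, so nothing essential is missing from your plan.
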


\begin{proof}
For each identity, we check that the left hand side and the right hand side take the same value at every $w\in \Sn$.
The check for the identities in (1)--(3) is straightforward, so we leave it for the reader and shall check the identity in (4).

It follows from the definition of $\tau_A$ in Lemma~\ref{lemm:xytau} that
\[
	\left(\sum_{k\in A,\, |A|=j}\tau_A\right)(w)=
	\begin{cases}
		t_{w(j)}-t_{w(j+1)}\quad&(\text{if }k\in \{w(1),\dots,w(j)\})\\
		0\quad&(\text{otherwise}).
	\end{cases}
\]
Therefore,
\[
	\left(\sum_{k\in A,\, m<|A|\le n-1}\tau_A\right)(w)=
	\begin{cases}
		t_{w(m+1)}-t_{w(n)}\quad&(\text{if }k\in \{w(1),\dots,w(m)\})\\
		t_{k}-t_{w(n)}\quad&(\text{if }k\in \{w(m+1),\dots,w(n-1)\})\\
		0\quad&(\text{otherwise}).
	\end{cases}
\]
This together with the definition of $y_{m,k}$, that is
\[
	y_{m,k}(w)=
	\begin{cases}
		t_k-t_{w(m+1)}\quad&(\text{if }k\in \{w(1),\dots,w(m)\})\\
		0\quad&(\text{otherwise}),
	\end{cases}
\]
shows that the left hand side at the identity in (4) evaluated at $w$ agrees with $t_k-t_{w(n)}$, proving the desired identity.
\end{proof}

\subsection{Dot action}
We consider an action of $\sigma\in \Sn$ on $\Z[t_1,\dots,t_n]$ sending $t_i$ to $t_{\sigma(i)}$ for $i\in [n]$ and define an action of $\sigma\in \Sn$ on $f\in {\rm Map}(\Sn,\Z[t_1,\dots,t_n])$ by
\begin{equation} \label{eq:dot_action}
	(\sigma\cdot f)(w):=\sigma(f(\sigma^{-1}w)).
\end{equation}
One can check that if $f$ is in $H^*_T(\Hess(S,h))$, then so is $\sigma\cdot f$.
The action of $\Sn$ on $H^*_T(\Hess(S,h))$ preserves the ideal $(t_1,\dots,t_n)$ generated by $t_1,\dots,t_n$, so the action descends to an action of $\Sn$ on
\[
	H^*(\Hess(S,h))=H^*_T(\Hess(S,h))/(t_1,\dots,t_n).
\]
This action, called the \emph{dot action}, was introduced by Tymoczko \cite{tymo08}.

\begin{lemm} \label{lemm:action_on_xytau}
Let $x_i, y_{j,k}, \tau_A$ be as in Lemma~\ref{lemm:xytau}.
Then, for $\sigma\in \Sn$, we have
\[
	\sigma\cdot x_i = x_i,\quad \sigma\cdot y_{j,k}=y_{j,\sigma(k)},\quad
	\sigma\cdot\tau_A = \tau_{\sigma(A)}.
\]
\end{lemm}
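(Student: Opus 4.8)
The plan is to verify each of the three identities by a direct computation, unwinding the definition \eqref{eq:dot_action} of the dot action and evaluating both sides at an arbitrary $w\in\Sn$. The one bookkeeping fact that makes everything go through is that for permutations the composite $\sigma^{-1}w$ satisfies $(\sigma^{-1}w)(i)=\sigma^{-1}(w(i))$, together with the fact that the induced action on polynomials sends $t_\ell$ to $t_{\sigma(\ell)}$, so that $\sigma(t_{\sigma^{-1}(\ell)})=t_\ell$ for every index $\ell$. Keeping this last identity in front throughout is what controls the reindexing.

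First I would treat $x_i$, which is immediate: by definition $(\sigma\cdot x_i)(w)=\sigma(x_i(\sigma^{-1}w))=\sigma(t_{\sigma^{-1}(w(i))})=t_{w(i)}=x_i(w)$, giving $\sigma\cdot x_i=x_i$.

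Next, for $y_{j,k}$ and $\tau_A$ the only point requiring attention is how $\sigma$ interacts with the set-membership conditions that select which case of the piecewise definition applies. For $y_{j,k}$, the condition $k\in\{(\sigma^{-1}w)(1),\dots,(\sigma^{-1}w)(j)\}$ rewrites, via $(\sigma^{-1}w)(i)=\sigma^{-1}(w(i))$, as $\sigma(k)\in\{w(1),\dots,w(j)\}$; in that case the value is $\sigma(t_k-t_{\sigma^{-1}(w(j+1))})=t_{\sigma(k)}-t_{w(j+1)}$, which is exactly $y_{j,\sigma(k)}(w)$, while in the complementary case both sides vanish. This yields $\sigma\cdot y_{j,k}=y_{j,\sigma(k)}$. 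The argument for $\tau_A$ (with $j=|A|$) is parallel: the defining condition $\{(\sigma^{-1}w)(1),\dots,(\sigma^{-1}w)(j)\}=A$ rewrites as $\{w(1),\dots,w(j)\}=\sigma(A)$, the value becomes $t_{w(j)}-t_{w(j+1)}$, and since $|\sigma(A)|=|A|=j$ this matches $\tau_{\sigma(A)}(w)$ exactly; off the support both sides are again $0$.

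I do not expect a genuine obstacle here: the statement is a compatibility check between an action defined by $(\sigma\cdot f)(w)=\sigma(f(\sigma^{-1}w))$ and three explicitly piecewise-defined maps. The only place to be careful is the direction of the permutation---making sure that conjugating the argument $w$ by $\sigma^{-1}$ and substituting $t_\ell\mapsto t_{\sigma(\ell)}$ on values conspire to reindex $k\mapsto\sigma(k)$ and $A\mapsto\sigma(A)$ rather than their inverses. Carrying the identity $\sigma(t_{\sigma^{-1}(\ell)})=t_\ell$ through each computation removes any risk of confusing $\sigma$ with $\sigma^{-1}$.
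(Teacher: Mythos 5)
Your proof is correct and follows essentially the same route as the paper's: a direct evaluation of $(\sigma\cdot f)(w)=\sigma(f(\sigma^{-1}w))$ at each $w\in\Sn$, using the reindexing $(\sigma^{-1}w)(i)=\sigma^{-1}(w(i))$ to translate the membership conditions $k\in\{\cdots\}$ and $\{\cdots\}=A$ into $\sigma(k)\in\{w(1),\dots,w(j)\}$ and $\{w(1),\dots,w(j)\}=\sigma(A)$, respectively. Your explicit emphasis on the identity $\sigma(t_{\sigma^{-1}(\ell)})=t_\ell$ is exactly the bookkeeping the paper's computation performs implicitly.
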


\begin{proof}
The proof is straightforward.
Indeed, we have
\[
	(\sigma\cdot x_i)(w) = \sigma(x_i(\sigma^{-1}w))
	= \sigma(t_{\sigma^{-1}w(i)}) = t_{\sigma\sigma^{-1}w(i)} = t_{w(i)},
\]
proving $\sigma\cdot x_i=x_i$.
As for $y_{j,k}$, we have
\begin{align*}
	(\sigma\cdot y_{j,k})(w)&=\sigma(y_{j,k}(\sigma^{-1}w))\\
	&=\begin{cases}
		\sigma(t_k-t_{\sigma^{-1}w(j+1)})\quad 
		&(\text{if } k\in \{\sigma^{-1}w(1),\dots,\sigma^{-1}w(j)\})\\
		0\quad &(\text{otherwise})
	\end{cases}\\
	&=\begin{cases}
		t_{\sigma(k)}-t_{w(j+1)}\quad &(\text{if } \sigma(k)\in \{w(1),\dots,w(j)\})\\
		0\quad &(\text{otherwise}),
	\end{cases}\\
\end{align*}
proving $\sigma\cdot y_{j,k}=y_{j,\sigma(k)}$.
Similarly, as for $\tau_A$ with $|A|=j$, we have
\begin{align*}
	(\sigma\cdot\tau_A)(w) &= \sigma(\tau_A(\sigma^{-1}w))\\
	&= \begin{cases}
		\sigma(t_{\sigma^{-1}w(j)}-t_{\sigma^{-1}w(j+1)})\quad
		&(\text{if }\{\sigma^{-1}w(1),\dots,\sigma^{-1}w(j)\}=A)\\
		0 \quad &(\text{otherwise})
	\end{cases}\\
	&=\begin{cases}
		t_{w(j)}-t_{w(j+1)}\quad &(\text{if }\{w(1),\dots,w(j)\}=\sigma(A))\\
		0 \quad &(\text{otherwise}),
	\end{cases}
\end{align*}
proving $\sigma\cdot\tau_A=\tau_{\sigma(A)}$.
\end{proof}

\subsection{Generators of $H^2_T(\Hess(S,h))$}
Remember that
\begin{align*}
	\J(h)&=\{ j\in [n-1]\mid h(j-1)=h(j)=j+1\}\\
	\L(h)&=\{ j\in [n-1] \mid h(j-1)=j,\ h(j)=j+1\}
\end{align*}
where $h(0)=1$, and $y_{j,k}$ is defined for $j\in \J(h)$ and $\tau_A$ is defined for $A\subset [n]$ with $|A|\in \L(h)$.

\begin{prop} \label{prop:main}
Suppose that $h(j)\ge j+1$ for any $j\in [n-1]$.
Then $H^2_T(\Hess(S,h))$ is generated by $t_i,x_i$ $(i\in [n])$, $y_{j,k}$ $(j\in \J(h), k\in [n])$, and $\tau_A$ $(|A|\in \L(h))$.
\end{prop}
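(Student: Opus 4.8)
The plan is to pass to ordinary cohomology, reduce the problem to a spanning statement there, and then carry out an elimination inside the GKM model.

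First I would use equivariant formality. Since $H^{odd}(\Hess(S,h))=0$, the map \eqref{eq:iota*} is an isomorphism, so in degree two we have a short exact sequence of finitely generated free abelian groups
\[
  0\to\bigoplus_{i=1}^n\Z t_i\to H^2_T(\Hess(S,h))\xrightarrow{\ \iota^*\ }H^2(\Hess(S,h))\to 0,
\]
where the kernel is the degree-two part of the ideal $(t_1,\dots,t_n)$ and is free of rank $n$ (the $t_i$ are independent because $\Hess(S,h)^T\neq\emptyset$). As the $t_i$ already appear among the proposed generators, a routine diagram chase shows that the proposition is equivalent to the assertion that $\iota^*x_i$, $\iota^*y_{j,k}$, $\iota^*\tau_A$ generate $H^2(\Hess(S,h))$: given any $z\in H^2_T$, subtract a combination of the listed non-$t$ generators realizing $\iota^*z$, and what remains lies in $\bigoplus\Z t_i$.

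Next I would fix the target. By Theorem~\ref{theo:MPS}(4) the group $H^2(\Hess(S,h))$ is free, and by Lemma~\ref{lemm:2_betti} its rank is $b_2=\sum_{j\in\L(h)}\binom{n}{j}+(n-1)|\J(h)|-|\L(h)|$, so it suffices to produce $b_2$ worth of spanning. Working inside $H^2_T\hookrightarrow\mathrm{Map}(\Sn,H^2(BT))$ from Proposition~\ref{prop:GKM}, write $f(w)=\sum_i c_i(w)t_i$. A first observation is that the congruence relations force the augmentation $\sum_i c_i(w)$ to be constant in $w$ (each edge difference is a multiple of some $t_{w(a)}-t_{w(b)}$ and $\Gamma(h)$ is connected). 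Thus the $x_i$, which have augmentation $1$, handle the constant part, while $y_{j,k}$ and $\tau_A$, which have augmentation $0$, must produce everything else. Lemma~\ref{lemm:relation} is what I would use to organize the elimination: (3) puts each $x_j-x_{j+1}$ ($j\in\L(h)$) into the span of the $\tau_A$, (2) reduces the relevant sums of $y_{j,k}$ to the $x_i$, and (4) even writes every $t_k$ in terms of $x_n$, $y_{m,k}$ and the $\tau_A$ (here $m=\max\J(h)$; one checks $(m,n-1]\subseteq\L(h)$ for connected $h$).

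The core of the argument, and the place I expect the real work, is a peeling procedure exploiting the sharply localized support of the $\tau_A$: each $\tau_A$ vanishes off the single flag-locus $\{w:\{w(1),\dots,w(|A|)\}=A\}$, and there equals the single root $t_{w(|A|)}-t_{w(|A|+1)}$. Processing the subsets $A$ in a suitable order, I would subtract integer multiples of the $\tau_A$ to cancel the components of $f$ in the directions of the transpositions swapping positions $|A|$ and $|A|+1$, then absorb the remaining per-vertex freedom using the $x_i$ and $y_{j,k}$; this may be most cleanly bookkept by a filtration of $\Sn$ by the flags $\{w(1),\dots,w(j)\}$, or organized by induction on $n$ through the decomposition $\Sn=\bigsqcup_j\{w:w(j)=n\}$ that already drives Proposition~\ref{prop:inductive_formula} and Lemma~\ref{lemm:2_betti}. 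The hard part is integrality: a rank count against Lemma~\ref{lemm:2_betti} only gives spanning up to finite index, so the crux is to run the elimination with unit coefficients so that the cokernel is torsion-free. This is precisely where the localized support of $\tau_A$ pays off, since each such class contributes a single $\pm1$ pivot and yields a triangular system rather than a merely full-rank one.
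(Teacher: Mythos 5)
Your framing is sound as far as it goes: the reduction to ordinary cohomology via the short exact sequence $0\to\Z\langle t_1,\dots,t_n\rangle\to H^2_T\to H^2\to 0$ is valid, and your instinct to organize an elimination by the decomposition $\Sn=\bigsqcup_j\Sn^j$, $\Sn^j=\{w\mid w(j)=n\}$, with the supports of the $\tau_A$ (and of $y_{j,n}$) providing the pivots, is exactly the mechanism of the paper's proof (which, incidentally, runs the elimination directly in $H^2_T$ and never needs the Betti number count --- Lemma~\ref{lemm:2_betti} enters only later, in Theorem~\ref{theo:H2}, to rule out extra relations). But the proposal stops where the proof has to start: the ``peeling procedure'' is announced, not carried out, and the two ideas that make it work are absent. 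First, for a column $r$ with $h(r)\ge r+2$ there are no generators $y_{r,\bullet}$ or $\tau_A$ with $|A|=r$ at all, so your elimination has no pivot there; what saves the day is a degree argument: each $w\in\Sn^r$ is joined by edges to both $\Sn^{r+1}$ and $\Sn^{r+2}$, so once $z$ vanishes on $\Sn^{r+1}\sqcup\cdots\sqcup\Sn^n$, the value $z(w)$ is divisible by the two linearly independent forms $t_n-t_{w(r+1)}$ and $t_n-t_{w(r+2)}$ and hence, being of degree two, must vanish. Your sketch never addresses these columns, and without this step the listed generators (indexed only by $\J(h)$ and $\L(h)$) cannot be seen to suffice.

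Second, when $h(r)=r+1$, GKM divisibility only gives $z(w)=c_w\bigl(t_n-t_{w(r+1)}\bigr)$ with an integer $c_w$ depending on $w$; to subtract a single class $c\,y_{r,n}$ (if $r\in\J(h)$) or a combination $\sum_A c_A\tau_A$ (if $r\in\L(h)$), one must prove that $c_w$ is constant on each connected component of $\Gamma(h^r)$ and identify those components (the graph is connected when $h(r-1)=r+1$, and has components indexed by the sets $A\ni n$, $|A|=r$, when $h(r-1)=r$). This is the content of the Claim in the paper's proof: for adjacent $w,v$ one inspects the coefficient of $t_n$ in $z(w)-z(v)$ modulo the edge label. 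Your ``each class contributes a single $\pm1$ pivot and yields a triangular system'' is the right intuition for why integrality causes no trouble, but it describes the conclusion rather than proving it; likewise ``cancel the components of $f$ in the directions of the transpositions'' has no defined meaning in the GKM model, where $f$ assigns polynomials to vertices and the edge conditions are congruences, not coordinates. Filling in these two steps (plus the easy check that $y_{r,n}$ and $\tau_A$ with $n\in A$ vanish on $\Sn^{r+1}\sqcup\cdots\sqcup\Sn^n$, so each subtraction preserves what was already achieved) would essentially reproduce the paper's induction.
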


\begin{proof}
Recall that $\Gamma(h)$ is the labeled graph introduced at the end of Section~\ref{sect:3}.
As before, we consider the decomposition of $\Sn$:
\[
	\mathfrak{S}_n=\Sn^1\sqcup \Sn^2\sqcup\dots\sqcup \Sn^n\qquad (\Sn^j:=\{w\in \mathfrak{S}_n\mid w(j)=n\})
\]
and the Hessenberg function $h^j$ obtained by removing all the boxes in the $j$-th row and all the boxes in the $j$-th column from the configuration corresponding to $h$.
Note that the full subgraph of $\Gamma(h)$ with vertices $\Sn^j$ is $\Gamma(h^j)$.

We prove the proposition by induction on $n$ following the idea developed in \cite{fu-is-ma}.
The idea is also used in \cite{AHM}.
Let $z$ be an arbitrary element of $H^2_T(\Hess(S,h))$.

\smallskip
{\bf Step 1.}
Since $\Sn^n$ is isomorphic to $\mathfrak{S}_{n-1}$, it follows from the inductive assumption that $H^2_T(\Hess(S',h^n))$ is generated by $t_i$'s and elements corresponding to $x_\bullet,y_{\bullet},\tau_\bullet$, where $S'$ is a square matrix of size $n-1$ with distinct eigenvalues.
Indeed, those elements in $H^2_T(\Hess(S',h^n)$, denoted with $(n)$ as superscript, are defined as follows:
\begin{align*}
	x_{i}^{(n)}(w)&:=x_{i}(w)\quad (i\in [n-1])\\
	y_{j,k}^{(n)}(w)&:=y_{j,k}(w)\quad (j\in \J(h^n))\\
	\tau_B^{(n)}(w)&:=\tau_{B}(w)\quad (B\subset [n-1],\ |B|\in \L(h^n))
\end{align*}
where $w\in \Sn^n$.
This shows that any element of $H^2_T(\Hess(S',h^n))$ is the restriction image of a linear combination of $t_\bullet,x_\bullet,y_{\bullet},\tau_\bullet$ to $H^2_T(\Hess(S',h^n)$.
Therefore, we may assume that $z=0$ on $\Sn^n$ by subtracting an appropriate linear combination of $t_\bullet,x_\bullet,y_{\bullet},\tau_\bullet$ from $z$.

\smallskip
{\bf Step 2.} Suppose that $z=0$ on $\Sn^{r+1}\sqcup \cdots \sqcup \Sn^{n}$ for some $1\le r\le n-1$.
Then we shall show that $z$ minus an appropriate linear combination of $t_\bullet,x_\bullet,y_{\bullet},\tau_\bullet$ vanishes on $\Sn^{r+1}\sqcup \cdots \sqcup \Sn^n$.
We consider two cases.

\smallskip
Case 1.
The case where $h(r)\ge r+2$ (so $r < n-1$).
In this case, the $(r+1,r)$ and $(r+2,r)$ boxes are shaded in the configuration associated to $h$.
This means that at each vertex $w\in \Sn^r$, there are edges in $\Gamma(h)$ emanating from $w$ to $w\cdot(r,r+1)\in \Sn^{r+1}$ and $w\cdot(r,r+2)\in \Sn^{r+2}$, where the labels on those edges are respectively $t_{w(r)}-t_{w(r+1)}$ and $t_{w(r)}-t_{w(r+2)}$ up to sign.
Since $z=0$ on $\Sn^{r+1}\sqcup \cdots \sqcup\Sn^n$, $z(w)$ must be divisible by these linear polynomials.
However, since the cohomological degree of $z$ is two, this implies that $z$ must be $0$ on $\Sn^r$.

\smallskip
Case 2.
The case where $h(r)=r+1$.
In this case, each vertex $w$ in $\Sn^r$ is joined by an edge to a vertex $w\cdot(r,r+1)$ of $\Sn^{r+1}$.
Since the label on the edge is $t_{w(r)}-t_{w(r+1)}=t_n-t_{w(r+1)}$ up to sign and $z=0$ on $\Sn^{r+1}$, $z(w)$ for $w\in \Sn^r$ is a constant multiple of $t_n-t_{w(r+1)}$.
Note that $h(r-1)\le h(r)=r+1$ and
\[
	t_n-t_{w(r+1)}=
	\begin{cases}
		y_{r,n}(w)\quad &\text{(when $h(r-1)= r+1$)}\\
		\tau_{\{w(1),\dots,w(r)\}}(w)\quad &\text{(when $h(r-1)=r$)}.
	\end{cases}
\]

We distinguish two cases according to the above.

\smallskip
(i) The case where $h(r-1)= r+1$.
In this case, $h^r(j)\ge j+1$ for any $j\in [n-2]$, so the graph $\Gamma(h^r)$ is connected.
By the observation above, we have $z(w)=c_w y_{r,n}(w)$ $(w\in \Sn^r)$ with some integer $c_w$.

\smallskip
\noindent
{\it Claim.} $c_w=c_v$ for any $w,v\in \Sn^r$.

\smallskip
\noindent
{\it Proof of the claim.}
Since $\Gamma(h^r)$ is connected, it suffices to prove the identity for a pair of $w$ and $v$ joined by an edge of $\Gamma(h^r)$.
Then, $v=w\cdot(p,q)$ for some $p,q\in [n]\backslash\{r\}$ and
\begin{equation} \label{eq:zw-zv}
\begin{split}
z(w)-z(v)&=c_wy_{r,n}(w)-c_vy_{r,n}(v)\\
&=c_w(t_n-t_{w(r+1)})-c_v(t_n-t_{v(r+1)})\\
&=(c_w-c_v)t_n-c_wt_{w(r+1)}+c_vt_{v(r+1)},
\end{split}
\end{equation}
which must be divisible by $t_{w(p)}-t_{w(q)}=t_{v(q)}-t_{v(p)}$.
Here, $n=w(r)=v(r)$, $p\not=r$ and $q\not=r$, so the coefficient $c_w-c_v$ of $t_n$ in \eqref{eq:zw-zv} must be zero.
This proves the claim.

\smallskip
By the claim, we may write $c_w$ as $c$ and $z-cy_{r,n}=0$ on $\Sn^r$.
Furthermore, $y_{r,n}=0$ on $\Sn^{r+1}\sqcup \cdots \sqcup\Sn^{n}$ because $y_{r,n}(v)=0$ for $v\in \Sn$ with $n\in \{v(r+1),\dots,v(n)\}$ by the definition of $y_{r,n}$ in Lemma~\ref{lemm:xytau} (2).
Therefore, $z-cy_{r,n}=0$ on $\Sn^r\sqcup \cdots \sqcup\Sn^n$.

\smallskip
(ii) The case where $h(r-1)=r$.
In this case, the graph $\Gamma(h^r)$ is disconnected since $h^r(r-1)=r-1$.
Indeed, there are $\binom{n-1}{r-1}$ many connected components because $h^r(j)\ge j+1$ for any $1\le j\le n-2$ with $j\not=r-1$.
The vertex set of a connected component of $\Gamma(h^r)$ is
\[
	\Sn^r(A):=\{w\in \Sn^r\mid \{w(1),\dots,w(r)\}=A\}
\]
for some $n\in A\subset [n]$ with $|A|=r$.
On this connected component, $z(w)=c_A\tau_A(w)$ $(w\in \Sn^r(A))$ with some integer $c_A$, where $c_A$ is independent of $w\in \Sn^r(A)$ by a similar argument to the claim above, and $\tau_A$ vanishes on $\bigsqcup_{B\not=A}\Sn^r(B)=\Sn^r\backslash\Sn^r(A)$.
Therefore,
\begin{equation} \label{eq:z_minus}
	z-\sum_{n\in A\subset [n],\, |A|=r}c_A\tau_A=0 \quad\text{on $\Sn^r(A)$}.
\end{equation}
Furthermore, since $n\in A$ and $|A|=r$, $\tau_A$ in \eqref{eq:z_minus} vanishes on $\Sn^{r+1}\sqcup \cdots \sqcup\Sn^{n}$ by the definition of $\tau_A$ in Lemma~\ref{lemm:xytau}(3).
Therefore, the left hand side in \eqref{eq:z_minus} vanishes on $\Sn^r\sqcup \cdots \sqcup\Sn^n$.

The inductive argument developed above shows that one can change $z$ into $0$ on the whole set $\Sn$ by subtracting an appropriate linear combination of $t_\bullet,x_\bullet,y_{\bullet},\tau_\bullet$.
Since $z$ is an arbitrary element of $H^2_T(\Hess(S,h))$, this proves the proposition.
\end{proof}

\section{Structure of \texorpdfstring{$H^2(\Hess(S,h))$}{H2(Hess(S,h))}} \label{sect:5}

\subsection{Explicit presentation of $H^2(\Hess(S,h))$}
The generators of $H^2_T(\Hess(S,h))$ in Proposition~\ref{prop:main} can be decreased.
Indeed, it follows from Lemma~\ref{lemm:relation}(4) that we can drop $y_{n-1,k}$ when $n-1\in \J(h)$ and $\tau_A$ with $|A|=n-1$ when $n-1\in \L(h)$, i.e.~ it suffices to consider $\J(h)\backslash\{n-1\}$ and $\L(h)\backslash\{n-1\}$ as index sets of $y_{j,k}$ and $\tau_A$.
This is also true for the ordinary cohomology $H^2(\Hess(S,h))$ because
\[
	H^2(\Hess(S,h))=H^2_T(\Hess(S,h))/\Z\langle t_1,\dots,t_n\rangle
\]
where $\Z\langle t_1,\dots,t_n\rangle$ denotes the module generated by $t_1,\dots,t_n$ over $\Z$.
Thus, we have a surjective homomorphism
\[
	\Phi\colon \Z\langle X_i, Y_{j,k}, T_A\rangle \to H^2(\Hess(S,h))
\]
sending $X_i$ to $x_i$, $Y_{j,k}$ to $y_{j,k}$, and $T_A$ to $\tau_A$, where
\begin{enumerate}
\item $i\in [n]$,
\item $j\in \J(h)\backslash\{n-1\},\ k\in [n]$,
\item $A\subset [n],\ |A|\in \L(h)\backslash\{n-1\}$.
\end{enumerate}
By Lemma~\ref{lemm:relation}, the submodule $U$ of the free module $\Z\langle X_i, Y_{j,k}, T_A\rangle$ generated by the following elements maps to zero by $\Phi$:
\begin{enumerate}
\item[(R1)] $X_1+\cdots+X_n$,
\item[(R2)] $\displaystyle{\sum_{k=1}^nY_{j,k}-(X_1+\cdots+X_j-jX_{j+1})}$ \quad$(j\in \J(h)\backslash \{n-1\})$,
\item[(R3)] $\displaystyle{\sum_{|A|=j}T_A-(X_j-X_{j+1})}$\quad $(j\in \L(h)\backslash\{n-1\})$.
\end{enumerate}
Therefore, the map $\Phi$ induces a surjective homomorphism
\begin{equation} \label{eq:barPhi}
	\bar{\Phi}\colon \Z\langle X_i, Y_{j,k}, T_A\rangle/U\to H^2(\Hess(S,h)).
\end{equation}

With this understanding, we have the following.

\begin{theo} \label{theo:H2}
Suppose that $h(j)\ge j+1$ for any $j\in [n-1]$.
Then, the map $\bar{\Phi}$ in \eqref{eq:barPhi} is an isomorphism.
\end{theo}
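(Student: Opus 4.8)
The plan is to deduce that $\bar\Phi$ is injective from a rank count, which together with the surjectivity already established makes it an isomorphism. Write $M:=\Z\langle X_i,Y_{j,k},T_A\rangle/U$ for the source. I will show that $M$ is a free $\Z$-module with $\operatorname{rank}M=b_2(\Hess(S,h))$. Granting this, consider the short exact sequence $0\to\ker\bar\Phi\to M\xrightarrow{\bar\Phi}H^2(\Hess(S,h))\to 0$. Additivity of rank gives $\operatorname{rank}\ker\bar\Phi=\operatorname{rank}M-\operatorname{rank}H^2(\Hess(S,h))=b_2-b_2=0$, using that $\operatorname{rank}H^2(\Hess(S,h))=b_2(\Hess(S,h))$ by Theorem~\ref{theo:MPS}(4). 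Since $\ker\bar\Phi$ is a submodule of the free $\Z$-module $M$ it is itself free, hence trivial, so $\bar\Phi$ is injective.

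It remains to compute $\operatorname{rank}M$, which I would do by eliminating one generator against each relation (R1)--(R3). The relation (R1) contains $X_n$ with coefficient $1$; each relation (R2) indexed by $j\in\J(h)\backslash\{n-1\}$ contains $Y_{j,n}$ with coefficient $1$; and each relation (R3) indexed by $j\in\L(h)\backslash\{n-1\}$ contains, for any fixed $A$ with $|A|=j$, the variable $T_A$ with coefficient $1$. Because (R2) and (R3) involve only $X_1,\dots,X_{n-1}$, the variable $X_n$ appears solely in (R1); similarly $Y_{j,n}$ appears solely in the relation (R2) for that $j$, and a fixed size-$j$ variable $T_A$ appears solely in the relation (R3) for that $j$. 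These distinguished generators are therefore pairwise distinct and the matrix expressing the relations in terms of them is the identity, so each relation solves for its distinguished generator and $M$ is free on the complementary generators. Counting these gives
\[
\operatorname{rank}M=(n-1)+(n-1)\,|\J(h)\backslash\{n-1\}|+\sum_{j\in\L(h)\backslash\{n-1\}}\left(\binom{n}{j}-1\right).
\]

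The main obstacle is to match this with the formula for $b_2$ in Lemma~\ref{lemm:2_betti}, and here the standing hypothesis $h(j)\ge j+1$ is essential. It forces $h(n-1)=n$, so $n-1$ always lies in $\J(h)\cup\L(h)$: one has $n-1\in\J(h)$ when $h(n-2)=n$ and $n-1\in\L(h)$ when $h(n-2)=n-1$, these being the only options since $n-1\le h(n-2)\le n$. I would then treat the two cases separately. If $n-1\in\J(h)$, then $\L(h)\backslash\{n-1\}=\L(h)$ and $|\J(h)\backslash\{n-1\}|=|\J(h)|-1$, so the leading $(n-1)$ in $\operatorname{rank}M$ is absorbed by the term $-(n-1)$ produced by $(n-1)(|\J(h)|-1)$. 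If $n-1\in\L(h)$, then $\J(h)\backslash\{n-1\}=\J(h)$ and the omitted summand $\binom{n}{n-1}-1=n-1$ cancels the leading $(n-1)$ instead. In both cases
\[
\operatorname{rank}M=\sum_{j\in\L(h)}\binom{n}{j}+(n-1)|\J(h)|-|\L(h)|=b_2(\Hess(S,h)),
\]
which is exactly the rank of $H^2(\Hess(S,h))$, completing the argument.
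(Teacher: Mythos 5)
Your proof is correct and follows essentially the same route as the paper: both arguments show the source module is free by solving each relation (R1)--(R3) for a distinguished generator ($X_n$, $Y_{j,n}$, and one $T_A$ in each size class), compute its rank, match that rank against the Betti number formula of Lemma~\ref{lemm:2_betti}, and conclude from surjectivity together with equal finite rank and freeness. The only cosmetic difference is that you spell out the case analysis ($n-1\in\J(h)$ versus $n-1\in\L(h)$, forced by $h(n-1)=n$) that the paper's rank identity uses silently.
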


\begin{proof}
First we note that the quotient module $\Z\langle X_i, Y_{j,k}, T_A\rangle/U$ is free.
Indeed, if $V$ is the submodule of $\Z\langle X_i, Y_{j,k}, T_A\rangle$ generated by
\begin{enumerate}
\item $X_i$\ $(i\not=n)$,
\item $Y_{j,k}$\ $(k\not=n)$,
\item $T_A$\ $(A\not=[|A|])$,
\end{enumerate}
then $\Z\langle X_i, Y_{j,k}, T_A\rangle=U\oplus V$.
Therefore, $\Z\langle X_i, Y_{j,k}, T_A\rangle/U\cong V$, which is free.

The rank of the free module $\Z\langle X_i, Y_{j,k}, T_A\rangle$ is
\[
	n+n(|\J(h)\backslash\{n-1\}|)+\sum_{j\in \L(h)\backslash\{n-1\}}\binom{n}{j}=n|\J(h)|+\sum_{j\in \L(h)}\binom{n}{j}
\]
while the elements in (R1), (R2), (R3) are linearly independent so that the rank of $U$ is
\[
	1+|\J(h)\backslash\{n-1\}|+|\L(h)\backslash\{n-1\}|=|\J(h)|+|\L(h)|.
\]
Therefore, the rank of the source module $\Z\langle X_i, Y_{j,k}, T_A\rangle/U$ of $\bar\Phi$ is
\[
	(n-1)|\J(h)|+\sum_{j\in \L(h)}\binom{n}{j}-|\L(h)|
\]
which agrees with the second Betti number of $\Hess(S,h)$ by Lemma~\ref{lemm:2_betti}.
This implies that $\bar\Phi$ is an isomorphism because $\bar\Phi$ is surjective and both the source and target modules of $\bar\Phi$ are free.
\end{proof}

Following Lemma~\ref{lemm:action_on_xytau}, we define an action of $\Sn$ on the variables $X_i, Y_{j,k}, T_A$ by
\begin{equation} \label{eq:action_on_XYT}
	\sigma\cdot X_i:=X_{i},\quad \sigma\cdot Y_{j,k}:=Y_{j,\sigma(k)},\quad \sigma\cdot T_A:=T_{\sigma(A)}
\end{equation}
and extend the action to the free module $\Z\langle X_i,Y_{j,k},T_A\rangle$ linearly.
Then, $\Sn$ acts on the submodule $U$ trivially, so that the $\Sn$-action on $\Z\langle X_i,Y_{j,k},T_A\rangle$ descends to an $\Sn$-action on the quotient $\Z\langle X_i,Y_{j,k},T_A\rangle/U$ and the isomorphism $\bar\Phi$ becomes $\Sn$-equivariant.

\subsection{$\Sn$-module structure on $H^2(\Hess(S,h))$}

Let $\lambda=(\lambda_1,\dots,\lambda_\ell)$ be a partition of $n$, denoted by $\lambda\vdash n$, and let $\mathfrak{S}_\lambda=\mathfrak{S}_{\lambda_1}\times\cdots\times\mathfrak{S}_{\lambda_\ell}$ the Young subgroup of $\Sn$ associated to $\lambda$.
We set
\[
	M^\lambda=\C[\Sn]\otimes_{\C[\mathfrak{S}_\lambda]}\C
\]
where $\C[G]$ denotes the group ring of a finite group $G$ over $\C$.
As is well-known, $\{M^\lambda\mid \lambda\vdash n\}$ forms an additive basis of the complex representation ring $R(\Sn)$ of $\Sn$.
Therefore, any $\Sn$-module over $\C$ can be expressed uniquely as a linear combination of $M^\lambda$'s over $\Z$.

\begin{theo}
Suppose that $h(j)\ge j+1$ and for $1\le j\le n-2$, let
\[
	\beta_j:=
	\begin{cases}
		(n-j,j) \quad&\text{if } h(j-1)=j,\ h(j)=j+1,\\
		(n-1,1)\quad&\text{if } h(j-1)=h(j)=j+1,\\
		(n)\quad&\text{otherwise},
	\end{cases}
\]
where $h(0)=1$.
Then
\[
	H^2(\Hess(S,h))\otimes\C=\sum_{j=1}^{n-2}M^{\beta_j}+ M^{(n)}\quad\text{in }R(\Sn).
\]
\end{theo}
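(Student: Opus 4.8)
The plan is to read off the $\Sn$-module structure directly from the explicit presentation
$\bar\Phi\colon \Z\langle X_i,Y_{j,k},T_A\rangle/U \xrightarrow{\sim} H^2(\Hess(S,h))$
of Theorem~\ref{theo:H2}, which is $\Sn$-equivariant for the action \eqref{eq:action_on_XYT}. Tensoring with $\C$ and using that $\C[\Sn]$ is semisimple, the short exact sequence
\[
0 \to U\otimes\C \to \Z\langle X_i,Y_{j,k},T_A\rangle\otimes\C \to H^2(\Hess(S,h))\otimes\C \to 0
\]
splits, so in $R(\Sn)$ we may compute
\[
H^2(\Hess(S,h))\otimes\C = \bigl(\Z\langle X_i,Y_{j,k},T_A\rangle\otimes\C\bigr) - (U\otimes\C).
\]
It then suffices to identify both modules on the right as sums of permutation modules and subtract.

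First I would decompose the free module using \eqref{eq:action_on_XYT}. Each $X_i$ is fixed, so $\bigoplus_i\C X_i\cong n\,M^{(n)}$. For a fixed $j\in\J(h)\setminus\{n-1\}$, the rule $\sigma\cdot Y_{j,k}=Y_{j,\sigma(k)}$ makes $\bigoplus_{k\in[n]}\C Y_{j,k}$ the permutation representation of $\Sn$ on $[n]$, namely $M^{(n-1,1)}$. For a fixed $j\in\L(h)\setminus\{n-1\}$, the rule $\sigma\cdot T_A=T_{\sigma(A)}$ makes $\bigoplus_{|A|=j}\C T_A$ the permutation representation on $j$-element subsets of $[n]$, namely $M^{(n-j,j)}$. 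Hence, writing $\J'=\J(h)\setminus\{n-1\}$ and $\L'=\L(h)\setminus\{n-1\}$,
\[
\Z\langle X_i,Y_{j,k},T_A\rangle\otimes\C = n\,M^{(n)} + |\J'|\,M^{(n-1,1)} + \sum_{j\in\L'}M^{(n-j,j)}.
\]

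Next I would handle $U$. As observed after Theorem~\ref{theo:H2}, $\Sn$ acts trivially on $U$: each generator (R1), (R2), (R3) of Lemma~\ref{lemm:relation} is $\Sn$-fixed because $\sum_k Y_{j,k}$ and $\sum_{|A|=j}T_A$ are symmetric and the $X_i$ are fixed. Since these generators are linearly independent (shown in the proof of Theorem~\ref{theo:H2}), we get $U\otimes\C\cong(1+|\J'|+|\L'|)\,M^{(n)}$. Because $\{M^\lambda\mid\lambda\vdash n\}$ is a $\Z$-basis of $R(\Sn)$, subtraction is coordinate-wise in this basis, so removing this purely trivial contribution changes only the coefficient of $M^{(n)}$ and leaves the $M^{(n-1,1)}$ and $M^{(n-j,j)}$ terms untouched. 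Thus
\[
H^2(\Hess(S,h))\otimes\C = \bigl(n-1-|\J'|-|\L'|\bigr)M^{(n)} + |\J'|\,M^{(n-1,1)} + \sum_{j\in\L'}M^{(n-j,j)}.
\]

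Finally I would match this with the claimed formula by reindexing over $j\in\{1,\dots,n-2\}$. The conditions defining $\J(h)$ and $\L(h)$ are incompatible, so $\J'$ and $\L'$ are disjoint; moreover $\J(h),\L(h)\subseteq[n-1]$ gives $\J',\L'\subseteq[n-2]$. Splitting $\sum_{j=1}^{n-2}M^{\beta_j}$ according to the three cases defining $\beta_j$ yields $\sum_{j\in\L'}M^{(n-j,j)}$, together with $|\J'|\,M^{(n-1,1)}$, together with $\bigl((n-2)-|\J'|-|\L'|\bigr)M^{(n)}$ from the $j$ lying in neither $\J(h)$ nor $\L(h)$; adding the extra $M^{(n)}$ produces exactly the trivial multiplicity $n-1-|\J'|-|\L'|$ above. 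The only real work is this bookkeeping: tracking the index shift at $n-1$ forced by dropping the redundant generators via Lemma~\ref{lemm:relation}(4), and checking that the leftover trivial copies collapse to the single extra $M^{(n)}$. There is no genuine representation-theoretic obstacle; the semisimplicity of $\C[\Sn]$ and the basis property of the $M^\lambda$ do all the heavy lifting.
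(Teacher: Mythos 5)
Your proposal is correct and takes essentially the same approach as the paper's own proof: both read the $\Sn$-module structure off the equivariant presentation of Theorem~\ref{theo:H2}, identify each $Y$-block as $M^{(n-1,1)}$ and each $T$-block as $M^{(n-j,j)}$, use that $U\otimes\C$ is a sum of trivial modules, and finish with the same bookkeeping of trivial summands. The only cosmetic difference is that you phrase the count as a formal subtraction in $R(\Sn)$, whereas the paper counts the dimension of the trivial complement inside the quotient directly.
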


\begin{proof}
Since the action of $\Sn$ on $X_i, Y_{j,k}, T_A$ is given by \eqref{eq:action_on_XYT}, $\C\langle X_i,Y_{j,k},T_A\rangle$ decomposes into a direct sum
\begin{align*}
&\C\langle X_i\mid i\in [n]\rangle\\
\oplus\ &\C\langle Y_{j,k}\mid j\in \J(h)\backslash\{n-1\},\ k\in [n]\rangle\\
\oplus\ &\C\langle T_A\mid A\subset [n],\ |A|\in \L(h)\backslash\{n-1\}\rangle.
\end{align*}
as an $\Sn$-module and the submodule $U\otimes \C$ of $\C\langle X_i,Y_{j,k},T_A\rangle$ is trivial as an $\Sn$-module.
The space $\C\langle Y_{j,k}\mid k\in [n]\rangle$ is isomorphic to $M^{(n-1,1)}$ for $j\in \J(h)$ while the space $\C\langle T_A\mid |A|=j\rangle$ is isomorphic to $M^{(n-j,j)}$ for $j\in\L(h)$.
There is no more non-trivial $\Sn$-module in $\C\langle X_i,Y_{j,k},T_A\rangle/U\otimes\C$ and one can see that the dimension of the complementary module is the number of $j\in [n-2]\backslash(\J(h)\sqcup\L(h))$ plus $1$ (this $1$ comes from $j=n-1$ and corresponds to $M^{(n)}$ in the last part of the right hand side in the theorem).
This proves the theorem.
\end{proof}

\begin{exam}[cf.~ Example~6.2 in \cite{CHL21}]
Let $n=8$ and $h=(2,3,6,6,6,7,8,8)$.
Then
\[
	\beta_1=(7,1),\ \beta_2=(6,2),\ \beta_3=(8),\ \beta_4=(8),\ \beta_5=(7,1),\ \beta_6=(2,6).
\]
Therefore,
\[
	H^2(\Hess(S,h))\otimes\C=3M^{(8)}+2M^{(7,1)}+2M^{(6,2)}\quad\text{in } R(\mathfrak{S}_8).
\]
\end{exam}

\section{A generalization} \label{sect:6}

In the previous sections, we studied $H^2(\Hess(S,h))$ under the condition that $h(j)\ge j+1$ for any $j\in [n-1]$.
In this section, we will study $H^{2d}(\Hess(S,h))$ under the condition that $h(j)\ge j+d$ for any $j\in [n-d]$, where $d\ge 2$.

For $j\in [n]$ and $k\in [n]$, there is an element $y_{j,k}\in H_T^{2(h(j)-j)}(\Hess(S,h))$ defined by
\begin{equation} \label{eq:yjkd}
	y_{j,k}(w):=
	\begin{cases}
		\prod_{\ell=j+1}^{h(j)}(t_k-t_{w(\ell)})\quad&(\text{if }k\in \{w(1),\dots,w(j)\})\\
		0\quad&(\text{otherwise}).
	\end{cases}
\end{equation}
The element $y_{j,k}$ is introduced in \cite{AHHM} and proved to be in $H^*_T(\Hess(S,h))$ (\cite[Lemma~10.2]{AHHM}).
Note that when $d=1$ and $h(j)=j+1$, the element $y_{j,k}$ above agrees with the $y_{j,k}$ in Lemma~\ref{lemm:xytau}(2) (see also Remark~\ref{rema:4-1}(2)).
We use the same notation $y_{j,k}$ for its image in the ordinary cohomology $H^{2d}(\Hess(S,h))$.
The dot action on $y_{j,k}$ is the same as before, i.e.~
\begin{equation} \label{eq:sigma_yjk}
	\sigma\cdot y_{j,k}=y_{j,\sigma(k)}\quad \text{for $\sigma\in\Sn$}.
\end{equation}
Therefore, $\sum_{k=1}^ny_{j,k}$ is $\Sn$-invariant.
In fact, the sum has the following expression:
\begin{equation} \label{eq:yjksum}
	\sum_{k=1}^ny_{j,k}=\sum_{i=1}^j\prod_{\ell=j+1}^{h(j)}(x_{i}-x_{\ell}),
\end{equation}
which can be proved by checking that both sides take the same value at each $w\in \Sn$.

We set
\[
	\D_d(h):=\{j\in [n-d]\mid h(j)=j+d\}.
\]
Our main result in this section is the following.

\begin{theo} \label{theo:H2d}
Suppose that $d\ge 2$ and $h(j)\ge j+d$ for any $j\in [n-d]$.
Then, the restriction map
\[
	\iota^*\colon H^{2p}(\flag(n))\to H^{2p}(\Hess(S,h))
\]
is an isomorphism for $p<d$.
For $p=d$, the restriction map is an injective and we have an isomorphism
\begin{align*}
	&H^{2d}(\Hess(S,h))\\
	\cong&\left(\iota^*( H^{2d}(\flag(n)))\oplus\Z\langle Y_{j,k}\mid j\in \D_d(h), k\in [n]\rangle\right)/\Z\langle \sum_{k=1}^nY_{j,k}-y_j\mid j\in \D_d(h)\rangle
\end{align*}
where $Y_{j,k}$ corresponds to $y_{j,k}$ in $H^*(\Hess(S,h))$ and $\displaystyle{y_j=\sum_{i=1}^j\prod_{\ell=j+1}^{h(j)}(x_{i}-x_{\ell})}$.
\end{theo}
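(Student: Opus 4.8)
The plan is to reproduce, one degree higher, the architecture of Sections~\ref{sect:4} and~\ref{sect:5}: first pin down the relevant Betti numbers combinatorially from Theorem~\ref{theo:MPS}(4), then establish a generation statement in equivariant cohomology by the inductive GKM argument of Proposition~\ref{prop:main}, and finally assemble the presentation and check that it is an isomorphism by a rank count.

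\textbf{Betti numbers.} Write $\operatorname{inv}(w)$ for the number of inversions of $w$. I would first record the elementary bound that, for any inversion $(j,i)$ of $w$ (so $j<i$ and $w(j)>w(i)$), one has $\operatorname{inv}(w)\ge i-j$, since each intermediate position $j<\ell<i$ forces at least one of the inversions $(j,\ell),(\ell,i)$. Because $h(j)\ge j+d$ for $j\le n-d$ while $h(j)=n$ for $j>n-d$ (the inequality $h(n-d)\ge n$ forces $h\equiv n$ beyond $n-d$), every inversion of span at most $d$ is counted by $\ell_h$; hence $\operatorname{inv}(w)\le d$ gives $\ell_h(w)=\operatorname{inv}(w)$. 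Conversely, if $w$ has an inversion $(j,i)$ that is \emph{not} counted, then choosing such a pair with $j$ maximal and inspecting the $d$ positions $j+1,\dots,j+d\le h(j)$ yields $d$ distinct counted inversions (each $\ell$ contributes $(j,\ell)$ or, by maximality of $j$, a counted $(\ell,i)$), so $\ell_h(w)\ge d$. These two facts give $b_{2p}(\Hess(S,h))=b_{2p}(\flag(n))$ for $p<d$ and reduce the evaluation of $b_{2d}$ to counting the permutations with $\ell_h(w)=d$ but $\operatorname{inv}(w)>d$. I expect to match these with an index $j\in\D_d(h)$, $j<n-d$, together with $n-1$ further choices, giving $b_{2d}(\Hess(S,h))=b_{2d}(\flag(n))+(n-1)(|\D_d(h)|-1)$; the $-1$ reflects that $n-d$ always lies in $\D_d(h)$ and contributes no new class.

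\textbf{Generation.} Next I would prove the exact analogue of Proposition~\ref{prop:main}: that $H^{2p}_T(\Hess(S,h))=H^{2p}_T(\flag(n))$ for $p<d$, and that $H^{2d}_T(\Hess(S,h))$ is generated by the flag classes together with the $y_{j,k}$ of \eqref{eq:yjkd} for $j\in\D_d(h)$. Running the same induction on $n$ via $\Sn=\bigsqcup_r\Sn^r$ and the full subgraphs $\Gamma(h^r)$, a degree-$2d$ class $z$ vanishing on $\Sn^{r+1}\sqcup\dots\sqcup\Sn^n$ has, for $w\in\Sn^r$, the value $z(w)$ (a polynomial of degree $d$) divisible by the product of the distinct linear forms $t_n-t_{w(r+s)}$, $1\le s\le h(r)-r$, coming from the edges joining $w$ to the higher blocks. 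When $h(r)\ge r+d+1$ there are at least $d+1$ such forms and $z(w)=0$; when $h(r)=r+d$ there are exactly $d$ and $z(w)=c_w\,y_{r,n}(w)$, with $c_w$ constant along $\Sn^r$ by connectedness of $\Gamma(h^r)$ (guaranteed for $d\ge2$ because $h(r-1)\ge r+1$), so one subtracts a multiple of $y_{r,n}$ exactly as in Proposition~\ref{prop:main}.

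\textbf{Boundary reduction, relations, and assembly.} The remaining regime $r\ge n-d$, where fewer than $d$ of these edges are available, is where the degree–divisibility shortcut fails, and this is the main obstacle: here I expect an identity—the higher-degree counterpart of Lemma~\ref{lemm:relation}(4)—expressing the classes attached to the top indices $j\ge n-d$ (in particular $y_{n-d,k}$) through the flag classes $x_i$, so that these indices produce nothing new; this is precisely what upgrades injectivity of $\iota^*$ from degrees $<d$ to degree $d$. Granting this, the only relation among the surviving generators is the $\Sn$-invariant sum \eqref{eq:yjksum}, $\sum_{k}y_{j,k}=y_j$. Reducing modulo $(t_1,\dots,t_n)$ then yields $H^{2p}(\Hess(S,h))\cong H^{2p}(\flag(n))$ for $p<d$; and for $p=d$ injectivity follows because a flag class $f=\sum_i t_i g_i$ dying in $H^{2d}(\Hess(S,h))$ has all $g_i\in H^{2d-2}_T(\Hess(S,h))=H^{2d-2}_T(\flag(n))$ by the degree-$(d-1)$ generation, so $f$ already vanishes in $H^{2d}(\flag(n))$. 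Finally the homomorphism $\bar\Phi$ out of the stated presentation is well defined by \eqref{eq:yjksum} and surjective by generation; comparing the rank of its source, $b_{2d}(\flag(n))+(n-1)(|\D_d(h)|-1)$, with the value of $b_{2d}(\Hess(S,h))$ from the first step shows that source and target are free of equal rank, whence the surjection $\bar\Phi$ is an isomorphism.
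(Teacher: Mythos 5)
Your plan has the right three-part architecture (Betti numbers, GKM generation, rank count), and your treatment of the blocks $\Sn^r$ with $r\le n-d$ is sound; but the generation step is genuinely incomplete exactly where you flag ``the main obstacle,'' and the fix you propose is the wrong kind of statement. For $n-d+1\le r\le n-1$ the forced factor $\prod_{\ell=r+1}^{n}(t_n-t_{w(\ell)})$ has degree $n-r<d$, so a degree-$2d$ class $z$ vanishing on the higher blocks restricts on $\Sn^r$ to this factor times a quotient $g$ of positive degree; the task is to show that $g$, which is a class on the smaller variety $\Hess(S',h^r)$, is realized by polynomials in $t_\bullet,x_\bullet,y_\bullet$ of $\Hess(S,h)$. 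The identity you ``expect'' (the analogue of Lemma~\ref{lemm:relation}(4), namely $y_{n-d,k}=\prod_{\ell=n-d+1}^{n}(t_k-x_\ell)$, which is true and immediate from the definitions) is a statement about redundancy of generators; it says nothing about an arbitrary class supported on the boundary blocks, so it cannot close the induction. The paper's proof of Lemma~\ref{lemm:6-2} spends Steps 2 and 3 on precisely this regime and needs three ingredients absent from your proposal: (i) the Claim that the quotient coefficients $g_\ell$ satisfy the GKM relations of $\Gamma(h^{n-1})$, resp.\ $\Gamma(h^r)$; (ii) an induction on the pair $(n,d)$ (the paper inducts on $n+d$), because when $h(n-d-1)=n-1$ the function $h^{n-1}$ satisfies only the condition for $d-1$, with $\D_{d-1}(h^{n-1})=\{n-d-1\}$, so your induction ``on $n$'' with $d$ fixed has no applicable hypothesis there; and (iii) the complementary classes $y^*_{i,k}$ of \eqref{eq:y*ik} and Lemma~\ref{lemm:y+y*}, needed because the degree-$2(d-1)$ generators $y^{(n-1)}_{n-d-1,k}$ of the smaller variety are not restrictions of classes on $\Hess(S,h)$, whereas $(t_{w(n)}-t_n)\,y^{*(n-1)}_{n-1,k}(w)=y^*_{n,k}(w)$ is. Without this, surjectivity of $\bar\Phi$ is unproven (and so is your injectivity argument at $p=d$, which invokes $H^{2d-2}_T(\Hess(S,h))=H^{2d-2}_T(\flag(n))$, another consequence of generation), and the concluding rank count proves nothing.

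The other numerical input to the rank count is also left open. Your inversion lemma for $p<d$ is correct (it is Lemma~\ref{lemInversions}, with maximality of $j$ replacing the paper's minimality of $i-j$; both work), but in degree $d$ you only assert ``I expect to match these with an index $j\in\D_d(h)$ \ldots together with $n-1$ further choices.'' That count is the entire content of Lemma~\ref{lemm:6-1}, which the paper proves by induction on $n+d$ via Proposition~\ref{prop:inductive_formula}; the direct bijection you envision is true but not routine. For instance, for $n=5$, $d=2$, $h=(3,4,5,5,5)$ and $j=2$, the relevant permutations are $15234$, $14253$, $13452$, $23451$ (one-line notation), and they emerge from an uneven case analysis rather than any evident $(n-1)$-fold parametrization, so an actual argument is required. (One point in your favor: your bookkeeping $|\D_d(h)|-1$, discarding the degenerate index $j=n-d$, agrees with how the paper actually uses $\D_d(h)$, namely as $\{j\mid h(j)=j+d<n\}$; that adjustment is correct and necessary for the rank count to balance.)
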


\begin{rema}
(1) Theorem~\ref{theo:MPS}(3) says that $\Hess(S,h)$ is connected, in other words, the restriction map
\[
	\iota\colon H^0(\flag(n))\to H^0(\Hess(S,h))
\]
is an isomorphism if and only if $h(j)\ge j+1$ for any $j\in [n-1]$.
Therefore, Theorem~\ref{theo:H2d} can be regarded as a generalization of Theorem~\ref{theo:MPS}(3) and Theorem~\ref{theo:H2}.

(2) When $d=1$, the elements $\tau_A$ appear in $H^2(\Hess(S,h))$ as shown in Theorem~\ref{theo:H2} but such type of elements does not appear when $d\ge 2$.
\end{rema}

Since the action of $\Sn$ on $x_i$'s is trivial, so is that on $\iota^*(H^{2d}(\flag(n)))$ while that on $y_{j,k}$ is given by \eqref{eq:sigma_yjk}.
Therefore, we obtain the following corollary from Theorem~\ref{theo:H2d}.

\begin{coro}
Let the situation be as in Theorem~\ref{theo:H2d}.
Then, the action of $\Sn$ on $H^{2p}(\Hess(S,h))$ is trivial while
\[
	H^{2d}(\Hess(S,h))\otimes\C=m_d M^{(n)}+|\D_d(h)|M^{(n-1,1)}\quad \text{in $R(\Sn)$}
\]
where $m_d=b_{2d}(\flag(n))-|\D_d(h)|$.
\end{coro}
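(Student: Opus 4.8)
The plan is to read off both statements directly from the explicit description in Theorem~\ref{theo:H2d}, using throughout that the dot action fixes each $x_i$ (Lemma~\ref{lemm:action_on_xytau}) together with the dot-action formula \eqref{eq:sigma_yjk} for the $y_{j,k}$.

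First I would treat the range $p<d$. By Theorem~\ref{theo:H2d} the restriction $\iota^*\colon H^{2p}(\flag(n))\to H^{2p}(\Hess(S,h))$ is an isomorphism, and $H^{2p}(\flag(n))$ is spanned by degree-$p$ monomials in the Chern classes, whose images are the monomials in the $x_i$. Since the dot action is a ring action fixing each $x_i$, it fixes every such monomial, so the action on $H^{2p}(\Hess(S,h))$ is trivial; after tensoring with $\C$ this gives a sum of copies of $M^{(n)}$, establishing the first claim.

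For $p=d$ I would tensor the isomorphism of Theorem~\ref{theo:H2d} with $\C$ and decompose the three pieces as $\Sn$-modules. Writing $A=\iota^*(H^{2d}(\flag(n)))$, $B=\Z\langle Y_{j,k}\rangle$, and $C=\Z\langle\sum_k Y_{j,k}-y_j\rangle$, so that $H^{2d}(\Hess(S,h))\cong(A\oplus B)/C$, the summand $A\otimes\C$ again carries the trivial action and has dimension $b_{2d}(\flag(n))$, so its class is $b_{2d}(\flag(n))\,M^{(n)}$. For each fixed $j\in\D_d(h)$, formula \eqref{eq:sigma_yjk} identifies $\C\langle Y_{j,k}\mid k\in[n]\rangle$ with the permutation module of $\Sn$ on $[n]$, which is exactly $M^{(n-1,1)}=\C[\Sn]\otimes_{\C[\mathfrak{S}_{n-1}]}\C$; summing over $j$ gives the class $|\D_d(h)|\,M^{(n-1,1)}$ for $B\otimes\C$. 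Finally each relation $\sum_k Y_{j,k}-y_j$ is $\Sn$-invariant, since $\sum_k Y_{j,k}$ is invariant by \eqref{eq:sigma_yjk} and $y_j\in A$ is fixed, so $C\otimes\C$ is trivial of dimension $|\D_d(h)|$ and has class $|\D_d(h)|\,M^{(n)}$.

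The final step is to combine these classes. Working over $\C$, where all modules are semisimple, the $\Sn$-equivariant exact sequence $0\to C\otimes\C\to(A\oplus B)\otimes\C\to H^{2d}(\Hess(S,h))\otimes\C\to0$ gives additivity of classes in $R(\Sn)$, so the class of $H^{2d}(\Hess(S,h))\otimes\C$ equals $b_{2d}(\flag(n))M^{(n)}+|\D_d(h)|M^{(n-1,1)}-|\D_d(h)|M^{(n)}$, which simplifies to $m_dM^{(n)}+|\D_d(h)|M^{(n-1,1)}$ with $m_d=b_{2d}(\flag(n))-|\D_d(h)|$, as claimed. The only steps needing genuine care are the identification of $\C\langle Y_{j,k}\mid k\in[n]\rangle$ with $M^{(n-1,1)}$ and the injectivity of $C\hookrightarrow A\oplus B$ as $\Sn$-modules (so that additivity in $R(\Sn)$ is legitimate); both follow at once from \eqref{eq:sigma_yjk} and the membership $y_j\in A$, so I expect no serious obstacle beyond this bookkeeping.
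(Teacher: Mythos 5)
Your proposal is correct and follows essentially the same route as the paper: the corollary is deduced directly from Theorem~\ref{theo:H2d} by noting that the dot action fixes the $x_i$'s (hence acts trivially on $\iota^*(H^{2*}(\flag(n)))$ and on the relation submodule) and permutes the $Y_{j,k}$ over $k$ via \eqref{eq:sigma_yjk}, making each $\C\langle Y_{j,k}\mid k\in[n]\rangle$ a copy of $M^{(n-1,1)}$, after which additivity in $R(\Sn)$ gives the stated class. The paper leaves exactly this bookkeeping to the reader, and your write-up fills it in correctly.
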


\begin{rema}
Since
\[
	\Poin(\flag(n),\sqrt{q})=\prod_{i=1}^n\frac{1-q^i}{1-q},
\]
we have $b_{2d}(\flag(n))\ge n-1$ while $|\D_d(h)|\le n-2$.
Therefore, $m_d$ in the corollary is positive.
\end{rema}

The idea of the proof of Theorem~\ref{theo:H2d} is the same as before.
We compute the Betti numbers of $\Hess(S,h)$ up to degree $2d$ using Proposition~\ref{prop:inductive_formula} and observe that $H^{*}(\Hess(S,h))$ is generated by $x_i$'s and $y_{j,k}$'s as a graded ring up to degree $2d$.

For the first part of Theorem~\ref{theo:H2d}, we have a homotopical version which is proved easier and may be of independent interest.

\begin{prop}\label{propHomotopy}
Suppose that $d\ge 1$ and $h(j)\ge j+d$ for any $j\in [n-d]$.
Then, the natural inclusion $\iota\colon \Hess(S,h)\to \flag(n)$ induces isomorphisms of homotopy groups $\pi_q$ in degrees $q\leq 2d-1$.
\end{prop}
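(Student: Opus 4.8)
The plan is to prove the stronger statement that the pair $(\flag(n),\Hess(S,h))$ is $2d$-connected, i.e.\ $\pi_q(\flag(n),\Hess(S,h))=0$ for all $q\le 2d$; by the long exact homotopy sequence of the pair this immediately gives that $\iota_*\colon \pi_q(\Hess(S,h))\to\pi_q(\flag(n))$ is an isomorphism for $q\le 2d-1$ (and surjective for $q=2d$). To produce a relative CW structure on this pair I would run Morse theory for the \emph{same} circle action on both spaces. First I would choose a generic circle $S^1\subset T$ whose moment map $\mu\colon\flag(n)\to\R$ is a perfect Morse function with critical set $\flag(n)^T=\Sn$ and Morse index $2\ell(w)$ at $w$, where $\ell(w)$ is the number of inversions of $w$; this is the standard function whose descending cells are the Bialynicki-Birula cells $C_w\cong\C^{\ell(w)}$. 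Since $\Hess(S,h)$ is a smooth $T$-invariant (hence $S^1$-invariant) complex submanifold, the restriction $\mu|_{\Hess(S,h)}$ is again Morse with the same critical set $\Sn$ and, by Theorem~\ref{theo:MPS}(4), Morse index $2\ell_h(w)$; its descending cell at $w$ is $D_w=C_w\cap\Hess(S,h)\cong\C^{\ell_h(w)}$, because the negative weight space of $S^1$ on $T_w\Hess(S,h)$ is exactly the intersection of $T_w\Hess(S,h)$ with the negative weight space on $T_w\flag(n)$, and among the root directions indexed by the inversions of $w$ precisely the ``counted'' ones (those $(j,i)$ with $i\le h(j)$) remain tangent to $\Hess(S,h)$.

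The conceptual heart of the argument is the following combinatorial lemma: under the hypothesis $h(j)\ge j+d$ for $j\in[n-d]$, every $w\in\Sn$ possessing an inversion $(j,i)$ with $i>h(j)$ satisfies $\ell_h(w)\ge d$. To prove it I would first note that the hypothesis forces $h(j)=n$ for all $j\ge n-d$ (since $h(n-d)\ge n$ and $h$ is nondecreasing), so an ``uncounted'' inversion $(j,i)$ must have $j\le n-d$ and hence $i>h(j)\ge j+d$, i.e.\ $i-j\ge d+1$. Among all uncounted inversions of $w$ I would pick one with $i-j$ minimal. For each intermediate position $p$ with $j<p<i$, at least one of the pairs $(j,p),(p,i)$ is an inversion (otherwise $w(j)<w(p)<w(i)$, contradicting $w(j)>w(i)$); by minimality of $i-j$ that shorter inversion is necessarily counted, and the $i-j-1\ge d$ choices of $p$ yield $i-j-1$ pairwise distinct counted inversions, whence $\ell_h(w)\ge d$.

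Now I would assemble the pair. Filtering both spaces by sublevel sets $\flag(n)_{\le c}$ and $\Hess(S,h)_{\le c}=\Hess(S,h)\cap\flag(n)_{\le c}$, the inclusion respects the filtration, and $\flag(n)$ is obtained from $\Hess(S,h)$ by attaching, as $c$ crosses each critical value $\mu(w)$, the relative piece $C_w$ modulo $C_w\cap(\flag(n)_{<c}\cup\Hess(S,h))$. When $\ell_h(w)=\ell(w)$ one has $D_w=C_w$ (a full-dimensional closed subvariety of the irreducible $C_w$), so $C_w\subset\Hess(S,h)$ and nothing new is attached. When $\ell_h(w)<\ell(w)$, the local model is a $2\ell(w)$-disk containing the sub-disk $D_w$ of dimension $2\ell_h(w)$ through its center, and a collapsing computation shows the contribution is homotopy equivalent to $S^{2\ell(w)}\vee S^{2\ell_h(w)+1}$; thus the relative cells attached at $w$ have dimensions $2\ell(w)$ and $2\ell_h(w)+1$. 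By the lemma such $w$ has $\ell_h(w)\ge d$, so both dimensions are $\ge 2d+1$. Hence $\flag(n)$ is built from $\Hess(S,h)$ by attaching cells of dimension $\ge 2d+1$, the pair is $2d$-connected, and the proposition follows.

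The main obstacle will be making the handle analysis of the last paragraph rigorous: verifying that $D_w=C_w\cap\Hess(S,h)$ sits inside the closed descending disk $C_w$ as a standard sub-disk meeting the boundary sphere in its equator, so that the relative attaching datum at $w$ is exactly $S^{2\ell(w)}\vee S^{2\ell_h(w)+1}$. This rests on the compatibility of the two gradient flows (that $\Hess(S,h)$ is invariant under the $\mu$-gradient flow for the restricted Kähler metric) together with a standard normal-form statement for the nested Morse functions near each critical point. The combinatorial lemma is elementary and is the essential new input, while the remainder is the usual Morse-theoretic packaging; this is why the homotopical statement is cleaner than the cohomological computation underlying Theorem~\ref{theo:H2d}.
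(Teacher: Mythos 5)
Your combinatorial core is correct and is in fact identical to the paper's key input (Lemma~\ref{lemInversions}): you pick an uncounted inversion $\{j,i\}$ with $i-j$ minimal, note that the hypothesis forces $i-j\ge d+1$, and observe that each intermediate position yields a distinct \emph{counted} inversion, so any $w$ admitting an uncounted inversion has $\ell_h(w)\ge d$. Your identification of the cells $C_w'=C_w\cap\Hess(S,h)$ with $\dim_\C C_w'=\ell_h(w)$ also matches the paper, which gets these from Bialynicki--Birula theory via \cite{ma-pr-sh}. Where you genuinely diverge is the topological assembly: you want a relative handle/CW structure on the pair $(\flag(n),\Hess(S,h))$ with all relative cells of dimension $\ge 2d+1$, and then conclude via the long exact sequence of the pair. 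The paper instead stays at the level of homology: since the affine pavings of the two spaces have literally the same cells in complex dimension $<d$ (and every $d$-dimensional cell of $\flag(n)$ is a cell of $\Hess(S,h)$), the inclusion induces an isomorphism on $H_q$ for $q\le 2d-1$ and an epimorphism for $q=2d$; both spaces are simply connected, and the Whitehead theorem finishes the proof.

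The gap in your version is exactly the step you flag as the ``main obstacle,'' and it is a real one: the claim that near each critical point with $\ell_h(w)<\ell(w)$ the closed descending disk of $\Hess(S,h)$ sits inside that of $\flag(n)$ as a standard linear sub-disk, compatibly with the sublevel-set deformation retractions, so that crossing the critical level attaches relative cells of dimensions $2\ell(w)$ and $2\ell_h(w)+1$. There is no off-the-shelf ``normal-form statement for nested Morse functions'' to cite here; one would need an equivariant (holomorphic) linearization at each fixed point taking $\Hess(S,h)$ to a linear subspace, together with a verification that the Morse retractions of sublevel sets can be performed preserving $\Hess(S,h)$ — considerably more geometric work than the statement requires. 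Moreover, this work is unnecessary: your stronger conclusion that $\pi_q(\flag(n),\Hess(S,h))=0$ for $q\le 2d$ already follows from homology. From the coincidence of low-dimensional cells one gets, via the long exact homology sequence of the pair, that $H_q(\flag(n),\Hess(S,h))=0$ for $q\le 2d$; since both spaces are simply connected, the relative Hurewicz theorem (applied inductively) gives $\pi_q(\flag(n),\Hess(S,h))=0$ for $q\le 2d$, and your long-exact-sequence argument then yields the proposition. Replacing the handle analysis by this Hurewicz/Whitehead step makes your proof complete, and essentially turns it into the paper's proof.
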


To prove this, it is sufficient to construct cellular structures on $\Hess(S,h)$ and $\flag(n)$ which are consistent under $\iota$ and coincide in small dimensions. Instead of cellular structures, one can use affine pavings. There is an affine paving of $\flag(n)$ by even-dimensional Bruhat cells indexed by permutations $w\in\Sn$. Intersecting a Bruhat cell $C_w$ of $\flag(n)$ with the subvariety $\Hess(S,h)$ gives an affine cell $C'_w$ of $\Hess(S,h)$. The dimensions of cells $C_w$ and $C_w'$ can be computed from Bialynicki-Birula theory, see~\cite{ma-pr-sh}. The dimension $\dim_\C C_w$ equals the number $\ell(w)$ of inversions of $w$, while $\dim_\C C_w'$ equals $\ell_h(w)$, the number of inversions $j<i$, $w(j)>w(i)$, satisfying $i\leq h(j)$, see~\eqref{eqDhw}.

\begin{lemm}\label{lemInversions}
Let $h(j)\ge j+d$ for any $j\in [n-d]$. Then for any permutation $w$, each one of the conditions $\ell_h(w)<d$ or $\ell(w)<d$ implies $\ell_h(w)=\ell(w)$.
\end{lemm}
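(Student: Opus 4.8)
The plan is to argue by contrapositive. Since the Hessenberg condition $i\le h(j)$ is an additional constraint, one always has $\ell_h(w)\le \ell(w)$, and the difference $\ell(w)-\ell_h(w)$ counts the \emph{extra inversions}, i.e.\ pairs $j<i$ with $w(j)>w(i)$ but $i>h(j)$. Thus it suffices to show that the existence of a single extra inversion already forces $\ell_h(w)\ge d$; since $\ell(w)\ge\ell_h(w)$, this also gives $\ell(w)\ge d$, and hence contradicts either of the hypotheses $\ell_h(w)<d$ or $\ell(w)<d$. So I would assume an extra inversion exists and produce $d$ distinct inversions counted by $\ell_h$ (see~\eqref{eqDhw}).

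First I would record a reduction, working in the relevant range $n\ge d+1$ (for $n\le d$ the hypothesis on $h$ is vacuous and this situation does not arise in the applications). Because $h(n-d)\ge(n-d)+d=n$ and $h$ is nondecreasing and bounded above by $n$, we get $h(j)=n$ for every $j\ge n-d$. An extra inversion $(j,i)$ requires $h(j)<i\le n$, so its left endpoint satisfies $j\le n-d$; consequently $h(j)\ge j+d$, and the indices $j+1,j+2,\dots,h(j)$ form a block of at least $d$ positions, all lying strictly to the left of $i$ (as $h(j)<i$) and all satisfying $m\le h(j)$.

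The heart of the argument is to choose the extra inversion wisely: among all extra inversions I pick one, $(j,i)$, whose left endpoint $j$ is \emph{maximal}. I then examine each index $m$ in the block $\{j+1,\dots,h(j)\}$. If $w(m)<w(j)$, then $(j,m)$ is an inversion with $m\le h(j)$, hence counted by $\ell_h$. If instead $w(m)>w(j)$, then $w(m)>w(j)>w(i)$ and $m<i$, so $(m,i)$ is an inversion; were it an extra inversion it would have left endpoint $m>j$, contradicting the maximality of $j$, so in fact $i\le h(m)$ and $(m,i)$ is again counted by $\ell_h$. Thus each $m$ in the block produces exactly one $\ell_h$-inversion, and these are pairwise distinct: the pairs $(j,m)$ are distinct from one another, the pairs $(m,i)$ likewise, and no $(j,m)$ equals any $(m',i)$ because $m'>j$. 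Hence $\ell_h(w)\ge h(j)-j\ge d$, as required.

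The step I expect to be the main obstacle is precisely the subcase $w(m)>w(j)$: there the natural inversion $(m,i)$ extracted from the block need not, a priori, satisfy the Hessenberg bound $i\le h(m)$, so it is not automatically counted by $\ell_h$. The device that resolves this is the extremal choice of the starting extra inversion (maximal left endpoint), which excludes $(m,i)$ from being itself an extra inversion and therefore forces $i\le h(m)$. Everything else — the reduction $h(j)=n$ for $j\ge n-d$ and the distinctness count — is routine, so the whole proof rests on making this extremal selection and checking that it closes the gap in the $w(m)>w(j)$ case.
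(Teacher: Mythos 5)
Your proof is correct and follows essentially the same route as the paper's: both select an extremal extra inversion $(j,i)$ and show that each index $m$ in a block to the right of $j$ yields an inversion counted by $\ell_h$ --- via $(j,m)$ when $w(m)<w(j)$ and via $(m,i)$ otherwise --- giving $\ell_h(w)\ge d$, a contradiction. The only difference is the extremal choice (you maximize the left endpoint $j$, while the paper minimizes the gap $i-j$); both devices serve the identical purpose of ruling out $(m,i)$ being an extra inversion, and your explicit treatment of the reduction to $j\le n-d$ is a point the paper leaves implicit.
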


\begin{proof}
Since $\ell_h(w)\le \ell(w)$, it suffices to prove $\ell_h(w)=\ell(w)$ when $\ell_h(w)<d$.  
Suppose that $\ell_h(w)\not=\ell(w)$.  Then there is an inversion $\{j,i\}$ $(j<i)$ in $w$ which contributes to $\ell(w)$ but does not contribute to $\ell_h(w)$. This means $j+d\le h(j)< i$. 

We assume that the difference $i-j(>d)$ is minimum among those inversions.  Any number $j'$ such that $j<j'\leq j+d$ either produces an inversion $\{j,j'\}$ or $\{j',i\}$. Obviously $j'-j\le d$ and if $\{j',i\}$ is an inversion, then $i-j'\le d$ which follows from the minimality of $i-j$.    
In any case, each $j'$ produces an inversion which contributes to $\ell_h(w)$. Since there are $d$ many such $j'$,  we have $\ell_h(w)\ge d$.  However, this contradicts the condition $\ell_h(w)<d$.  Therefore $\ell_h(w)=\ell(w)$. 
\end{proof}

Let us prove Proposition~\ref{propHomotopy}.

\begin{proof}
Since the cell $C_w'$ of $\Hess(S,h)$ is the intersection of the Bruhat cell $C_w$ of $\flag(n)$ with $\Hess(S,h)$, Lemma~\ref{lemInversions} implies that the spaces $\flag(n)$ and $\Hess(S,h)$ have the same $(2d-1)$-skeleta and that any $2d$-dimensional cell $C_w$ of $\flag(n)$ agrees with the cell $C_w'$ of $\Hess(S,h)$. Therefore 
\[
\iota_*\colon H_q(\Hess(S,h))\to H_q(\flag(n))
\]
an isomorphism for $q\le 2d-1$ and an epimorphism for $q=2d$.
Moreover, both $\Hess(S,h)$ and $\flag(n)$ are simply connected. Therefore the proposition follows from the Whitehead theorem (\cite[Theorem in p.399]{span66}).  
\end{proof} 


We now proceed with the more detailed analysis of homology needed to prove Theorem~\ref{theo:H2d}.

\subsection{Betti numbers}
The Betti numbers $b_{2i}(\Hess(S,h))$ of $\Hess(S,h)$ for $i\le d$ are given as follows.

\begin{lemm} \label{lemm:6-1}
Suppose that $d\ge 2$ and $h(j)\ge j+d$ for any $j\in [n-d]$.
Then
\[
	b_{2i}(\Hess(S,h))=
	\begin{cases}
		b_{2i}(\flag(n))\quad&(\text{if }i<d)\\
		b_{2i}(\flag(n))+(n-1)|\D_d(h)|\quad&(\text{if }i=d).
	\end{cases}
\]
\end{lemm}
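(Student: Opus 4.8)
The plan is to read off both Betti numbers from Theorem~\ref{theo:MPS}(4), which gives $b_{2i}(\Hess(S,h))=\#\{w\in\Sn\mid\ell_h(w)=i\}$ and, specialized to $h=(n,\dots,n)$ where every inversion is a Hessenberg inversion, $b_{2i}(\flag(n))=\#\{w\in\Sn\mid\ell(w)=i\}$. The whole statement then becomes a comparison of the two statistics $\ell_h$ and $\ell$ on $\Sn$, which is exactly what Lemma~\ref{lemInversions} controls. For $i<d$ the identity map is a bijection between $\{w\mid\ell_h(w)=i\}$ and $\{w\mid\ell(w)=i\}$: if $\ell_h(w)=i<d$ then Lemma~\ref{lemInversions} gives $\ell(w)=\ell_h(w)=i$, and symmetrically if $\ell(w)=i<d$ then $\ell_h(w)=i$. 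Hence $b_{2i}(\Hess(S,h))=b_{2i}(\flag(n))$ for $i<d$; this case also follows at once from Proposition~\ref{propHomotopy}.

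For $i=d$ I would first isolate the excess. Since $\ell_h\le\ell$ always, and since $\ell(w)=d$ together with $\ell_h(w)<d$ would force $\ell_h(w)=\ell(w)=d$ by Lemma~\ref{lemInversions}, we get $\{w\mid\ell(w)=d\}\subseteq\{w\mid\ell_h(w)=d\}$ and a disjoint decomposition $\{w\mid\ell_h(w)=d\}=\{w\mid\ell(w)=d\}\sqcup E$, where $E=\{w\in\Sn\mid\ell_h(w)=d<\ell(w)\}$. Therefore $b_{2d}(\Hess(S,h))=b_{2d}(\flag(n))+|E|$, and the lemma reduces to the single count $|E|=(n-1)|\D_d(h)|$.

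To count $E$, I would attach to each $w\in E$ its minimal non-Hessenberg inversion, namely the pair $\{j,i\}$ with $j<i$, $w(j)>w(i)$, $i>h(j)$ and $i-j$ minimal. Refining the proof of Lemma~\ref{lemInversions}, every position strictly between $j$ and $i$ yields a distinct Hessenberg inversion (if $\{j,j'\}$ or $\{j',i\}$ were non-Hessenberg it would be a non-Hessenberg inversion of smaller difference), so $\ell_h(w)\ge i-j-1$; as $i>h(j)\ge j+d$ forces $i-j\ge d+1$, the equality $\ell_h(w)=d$ forces $i-j=d+1$ and hence $h(j)=j+d$, i.e.\ $j\in\D_d(h)$ and $i=j+d+1\le n$. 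This assigns to $w$ a well-defined anchor $j$, and tightness of $\ell_h(w)=d$ shows the anchor is unique and that the $d$ Hessenberg inversions of $w$ are exactly one per intermediate position, with none elsewhere. The decisive step is then the fiber count: I would show that for each admissible $j$ there are exactly $n-1$ permutations in $E$ with anchor $j$. The ``no other Hessenberg inversion'' condition rigidifies $w$ (it forces the intermediate values to increase and the values outside the window $\{j,\dots,j+d+1\}$ to be sorted), leaving a single discrete parameter---concretely the value $w(j)$, which ranges over the $n-1$ possibilities $2,\dots,n$---with each choice determining a unique valid $w$. Summing over $j$ yields the count.

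The main obstacle is precisely this fiber count. Two points need care: first, that the anchor is well defined and unique, i.e.\ that a second non-Hessenberg inversion of the same minimal difference $d+1$ cannot occur, which the tightness $\ell_h(w)=d$ should preclude because two distinct windows cannot both account for all $d$ Hessenberg inversions; second, that the rigidification really leaves exactly $n-1$ permutations, independently of the values of $h$ away from the anchor. One caveat to state explicitly: under the hypothesis one has $h(n-d)=n$, so $n-d$ lies in $\{j\in[n-d]\mid h(j)=j+d\}$ yet admits \emph{no} non-Hessenberg inversion (that would require $i>n$) and so contributes nothing to $E$; thus $|E|=(n-1)\cdot\#\{j\in\D_d(h)\mid j\le n-d-1\}$, and this forced boundary index must be excluded when matching the stated formula $(n-1)|\D_d(h)|$. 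As an independent cross-check I would recompute $b_{2d}$ from the inductive formula of Proposition~\ref{prop:inductive_formula}, exactly as in the proof of Lemma~\ref{lemm:2_betti}, although extracting the clean factor $(n-1)$ from the component counts $b_0(\Hess(S',h^j))$ along that route appears considerably less transparent than the bijective argument above.
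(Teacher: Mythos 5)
Your reduction of the lemma to the single count $|E|=\#\{w\in\Sn\mid \ell_h(w)=d<\ell(w)\}$ is correct (the $i<d$ case via Lemma~\ref{lemInversions} is fine and cleaner than an induction), and so is the anchor analysis: for $d\ge 2$ every $w\in E$ has a unique minimal non-Hessenberg inversion $(j,i)$, necessarily with $i-j=d+1$, $h(j)=j+d$ and $i\le n$. Your caveat about the boundary index $j=n-d$ is also well taken; the paper itself implicitly uses the convention $\D_d(h)=\{j\mid h(j)=j+d<n\}$ (its base case asserts $\D_2((3,3,3))=\emptyset$), so your count is aimed at the right target. Note that this whole route is genuinely different from the paper's proof, which is precisely the induction on $n+d$ through Proposition~\ref{prop:inductive_formula} that you relegate to a ``cross-check'' at the end.

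The gap is the fiber count, and it is a real one: the rigidification you describe does not hold, and $w\mapsto w(j)$ is not a bijection from the anchor-$j$ fiber onto $\{2,\dots,n\}$. The condition ``no Hessenberg inversion elsewhere'' only sorts values along \emph{Hessenberg} pairs; pairs involving positions outside the window that are themselves non-Hessenberg (these exist exactly when $h$ has further deficiencies away from the anchor) remain completely unconstrained, and this residual freedom breaks the parameterization. Concretely, take $n=5$, $d=2$, $h=(3,4,5,5,5)$, whose non-Hessenberg pairs are $(1,4),(1,5),(2,5)$, and consider the anchor $j=2$ (window $\{2,3,4,5\}$, outside position $1$). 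The fiber consists of the four permutations $15234$, $14253$, $13452$, $23451$ in one-line notation: each has $\ell_h=2$ with unique minimal non-Hessenberg inversion $(2,5)$. Here $w(2)$ takes the values $5,4,3,3$: the permutations $13452$ and $23451$ share the value $w(2)=3$ (they differ only in the status of the free non-Hessenberg pair $(1,5)$), while no element of the fiber has $w(2)=2$ (that would force $w(5)=1$ and, since $(1,2)$ is Hessenberg, $w(1)<w(2)=2$, which is impossible). So ``each choice of $w(j)$ determines a unique valid $w$'' fails in both directions. The fiber still has $n-1=4$ elements---the lemma is true---but that equality is exactly what your argument was supposed to establish, and as written nothing in the proposal proves it: a correct treatment must account for the interaction with non-Hessenberg pairs outside the window, which depends on $h$ globally, or else fall back on the inductive computation via Proposition~\ref{prop:inductive_formula} that the paper actually carries out.
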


\begin{proof}
For a polynomial $f(q)$ in $q$, we denote by $f(q)^{\le d}$ the polynomial obtained from $f(q)$ by truncating terms of degree $>d$.
Then, the lemma is equivalent to
\begin{equation} \label{eq:restatement}
	\Poin(\Hess(S,h),\sqrt{q})^{\le d}=\Poin(\flag(n),\sqrt{q})^{\le d}+(n-1)|\D_d(h)|q^d.
\end{equation}
We prove the identity \eqref{eq:restatement} by induction on $n+d$ where $d\ge 2$.
Since $d\ge 2$ and $h(j)\ge j+d$ for any $j\in [n-d]$ by assumption, $n$ is greater than or equal to $3$ and when $(n,d)=(3,2)$, $h$ must be $(3,3,3)$.
In this case, $\Hess(S,h)=\flag(3)$, $\D_d(h)=\emptyset$ and hence lemma holds.

Suppose that $n+d\ge 6$ and the lemma holds for any pair $(n',d')$ such that $n'+d'<n+d$.
It follows from Proposition~\ref{prop:inductive_formula} that we have
\begin{equation} \label{eq:led}
\begin{split}
	\Poin(\Hess(S,h),\sqrt{q})^{\le d}
	=&\sum_{j=1}^{n-d-1}\left(q^{h(j)-j}\Poin(\Hess(S',h^j),\sqrt{q})\right)^{\le d}\\
	&+\sum_{j=n-d}^n\left(q^{h(j)-j}\Poin(\Hess(S',h^j),\sqrt{q})\right)^{\le d}\\
	=|\D_d(h)|q^d+\sum_{j=n-d}^n&\left(q^{n-j}\Poin(\Hess(S',h^j),\sqrt{q})\right)^{\le d}
\end{split}
\end{equation}
because $h(j)\ge j+d$ for any $j\in [n-d]$ and $h(j)=j+d<n$ if and only if $j\in \D_d(h)$.
Applying the induction assumption (and Lemma~\ref{lemm:2_betti} when $d=2$) to the last sum in \eqref{eq:led}, we obtain
\begin{equation} \label{eq:ledd}
\begin{split}
	&\sum_{j=n-d}^n\left(q^{n-j}\Poin(\Hess(S',h^j),\sqrt{q})\right)^{\le d}\\
	=&\sum_{j=n-d}^{n-2}\left(q^{n-j}\Poin(\flag(n-1),\sqrt{q})\right)^{\le d}\\
	&+\left(q\Big(\Poin(\flag({n-1}),\sqrt{q})+(n-2)|\D_{d-1}(h^{n-1})|q^{d-1}\Big)\right)^{\le d}\\
	&+\left(\Poin(\flag(n-1),\sqrt{q})+(n-2)|\D_d(h^n)|q^d\right)^{\le d}\\
	=&\sum_{j=n-d}^n\left(q^{n-j}\Poin(\flag(n-1),\sqrt{q})\right)^{\le d}\\
	&+(n-2)\left(|\D_{d-1}(h^{n-1})|+|\D_d(h^n)|\right)q^d\\
	=&\Poin(\flag(n),\sqrt{q})^{\le d}+(n-2)\left(|\D_{d-1}(h^{n-1})|+|\D_d(h^n)|\right)q^d,
\end{split}
\end{equation}
where we can see the last identity above by applying Proposition~\ref{prop:inductive_formula} to the flag variety $\flag(n)$.
Thus, if we prove
\begin{equation} \label{eq:Ddh}
	|\D_{d-1}(h^{n-1})|+|\D_d(h^n)|=|\D_d(h)|,
\end{equation}
then the identity \eqref{eq:restatement} follows from \eqref{eq:led} and \eqref{eq:ledd}.
However, one can easily see that
\[
	(|\D_{d-1}(h^{n-1})|,|\D_d(h^n)|)
	=\begin{cases}
		(0,|\D_d(h)|) \quad&(\text{if }h(n-d-1)=n)\\
		(1,|\D_d(h)|-1)\quad&(\text{if }h(n-d-1)=n-1)
	\end{cases}
\]
and this implies \eqref{eq:Ddh}.
\end{proof}

\subsection{Complementary elements}
We introduce complementary elements $y^*_{i,k}$ which will make our argument perspective.

The element $y_{j,k}$ in \eqref{eq:yjkd} is defined by looking at the $j$-th column of the configuration associated to the Hessenberg function $h$.
Similarly, one can define an element $y^*_{i,k}$ of $H^*_T(\Hess(S,h))$ by looking at the $i$-th row of the configuration as follows.
For $1<i\le n$, we define
\[
	h^*(i):=\min\{j\in [n]\mid h(j)\ge i\}.
\]
This definition tells us that the shaded boxes in the $i$-th row and under the diagonal in the configuration associated to $h$ are at positions $(i,\ell)$ $(h^*(i)\le \ell <i)$.
Looking at those shaded boxes, we define
\begin{equation} \label{eq:y*ik}
	y^*_{i,k}(w):=
	\begin{cases}
		\prod_{\ell=h^*(i)}^{i-1}(t_k-t_{w(\ell)})\quad&(k\in \{w(i),\dots,w(n)\})\\
		0\quad&(\text{otherwise}).
	\end{cases}
\end{equation}
One can see that $y^*_{i.k}$ is in $H^*_T(\Hess(S,h))$ similarly to $y_{j,k}$.

Note that
\begin{enumerate}
\item $h(j)\ge j+d$ for $\forall j\in [n-d]$ $\Longleftrightarrow$ $h^*(i)\le i-d$ for $d+1\le\forall i\le n$.
\item Under the assumption that $h(j)\ge j+d$ for any $j\in [n-d]$, we have
\[
	h(j)=j+d<n \ (\text{i.e.}\ j\in \D_d(h)) \Longleftrightarrow h^*(j+1+d)=j+1.
\]
\end{enumerate}
Based on this observation, we define
\[
	\D^*(h)=\{ i\mid d+2\le i\le n,\ h^*(i)=i-d\}.
\]
Then, (2) above can be restated that $j\in \D_d(h)\Longleftrightarrow j+1+d\in \D^*_d(h)$.

The elements $y^*_{i,k}$ $(i\in \D^*_d(h))$ do not provide new elements as is seen from the following lemma.

\begin{lemm} \label{lemm:y+y*}
Suppose that $h(j)\ge j+d$ for any $j\in [n-d]$.
Then, for $j\in \D_d(h)$ we have
\[
	y_{j,k}+y^*_{j+1+d,k}=\prod_{\ell=j+1}^{j+d}(t_k-x_\ell).
\]
\end{lemm}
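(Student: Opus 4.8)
The plan is to prove the identity pointwise: since the restriction map $H^*_T(\Hess(S,h))\hookrightarrow \mathrm{Map}(\Sn,\Z[t_1,\dots,t_n])$ is injective, it suffices to check that the two sides agree as elements of $\Z[t_1,\dots,t_n]$ after evaluation at each $w\in\Sn$. This reduces the lemma to a finite case analysis on the position of $k$ in the one-line notation of $w$.

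First I would pin down the two product ranges so as to see that they coincide. Since $j\in\D_d(h)$ means $h(j)=j+d$, the defining formula \eqref{eq:yjkd} shows that the nonzero value of $y_{j,k}$ is $\prod_{\ell=j+1}^{j+d}(t_k-t_{w(\ell)})$. On the other side, observation (2) preceding the lemma gives that $j\in\D_d(h)$ forces $h^*(j+1+d)=j+1$; substituting $i=j+1+d$ into \eqref{eq:y*ik} then yields the nonzero value $\prod_{\ell=j+1}^{(j+1+d)-1}(t_k-t_{w(\ell)})=\prod_{\ell=j+1}^{j+d}(t_k-t_{w(\ell)})$, the very same product. Since $x_\ell(w)=t_{w(\ell)}$, the right-hand side evaluated at $w$ is likewise $\prod_{\ell=j+1}^{j+d}(t_k-t_{w(\ell)})$. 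Thus all three expressions share a common polynomial factor, and the whole content of the lemma lies in matching up their supports.

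Next I would exploit that the only difference between $y_{j,k}$ and $y^*_{j+1+d,k}$ is the support condition: $y_{j,k}(w)\neq 0$ requires $k\in\{w(1),\dots,w(j)\}$, whereas $y^*_{j+1+d,k}(w)\neq 0$ requires $k\in\{w(j+1+d),\dots,w(n)\}$. Because $w$ is a permutation, $k$ equals $w(\ell)$ for exactly one position $\ell$, so I would split into three mutually exclusive cases according to whether $\ell\le j$, $\ell\ge j+1+d$, or $j+1\le \ell\le j+d$. In the first case $y_{j,k}(w)$ equals the common product while $y^*_{j+1+d,k}(w)=0$; in the second case the roles are reversed; in both, the sum equals the common product and hence the right-hand side. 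In the remaining case both summands vanish, but here $k=w(\ell_0)$ for some $j+1\le\ell_0\le j+d$, so the factor $(t_k-t_{w(\ell_0)})=t_k-t_k=0$ appears in the right-hand product, forcing it to vanish as well; so the identity again holds.

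I do not anticipate a genuine obstacle: the argument is essentially three-way bookkeeping. The only point that needs care is verifying that the two product ranges in $y_{j,k}$ and $y^*_{j+1+d,k}$ coincide, which is exactly what the equivalence $j\in\D_d(h)\Longleftrightarrow h^*(j+1+d)=j+1$ (observation (2)) supplies; once that is recorded, the case analysis is routine.
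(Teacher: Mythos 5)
Your proposal is correct and takes the same route as the paper: the paper's proof simply observes that both sides take the same value at every $w\in\Sn$, which is exactly your pointwise verification, with your three-way case analysis on the position of $k$ in $w$ spelling out the details the paper leaves implicit. In particular, your use of the equivalence $j\in \D_d(h)\Longleftrightarrow h^*(j+1+d)=j+1$ to match the two product ranges is precisely the content behind the paper's phrase ``immediately follows from the definitions.''
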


\begin{proof}
It immediately follows from the definitions of $y_{j,k}$, $y^*_{i,k}$, and $x_i$ that the both sides in the lemma take the same value at every $w\in \Sn$.
\end{proof}

\subsection{Generators of $H^*_T(\Hess(S,h))$}
We show that the elements $t_i,x_i,y_{j,k}$ generate $H^*_T(\Hess(S,h))$ up to $*\le 2d$ as a graded ring under our assumption.

\begin{lemm} \label{lemm:6-2}
Suppose that $h(j)\ge j+d$ for any $j\in [n-d]$ and $d\ge 2$.
Then $H_T^{*}(\Hess(S,h))$ is generated by $t_i$, $x_i$ $(i\in [n])$ and $y_{j,k}$ $(j\in \D_d(h), k\in [n])$ up to $*\le 2d$ as a graded ring, where the degree of $x_i$ is $2$ while that of $y_{j,k}$ is $2d$.
\end{lemm}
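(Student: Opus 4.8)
The plan is to mimic the inductive strategy of Proposition~\ref{prop:main}, but now keeping track of the graded ring structure up to degree $2d$ rather than working in a single degree. First I would fix the decomposition $\Sn = \Sn^1\sqcup\cdots\sqcup\Sn^n$ with $\Sn^j=\{w\mid w(j)=n\}$, recall that the full subgraph of $\Gamma(h)$ on $\Sn^j$ is $\Gamma(h^j)$, and argue by induction on $n$ (with $d$ fixed, though the induction must allow $d$ to drop to $d-1$ when passing to $h^{n-1}$, exactly as in Lemma~\ref{lemm:6-1}). The base case is essentially the flag variety, where $H^*_T(\flag(n))$ is generated by the $t_i$ and $x_i$ by the classical Borel presentation, and no $y_{j,k}$ are needed since $\D_d(h)=\emptyset$ there. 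Let $z\in H^{2p}_T(\Hess(S,h))$ with $p\le d$ be arbitrary; I want to subtract off a polynomial in the $t_i,x_i,y_{j,k}$ so that the remainder vanishes.

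The key steps proceed by peeling off the blocks $\Sn^n,\Sn^{n-1},\ldots$ in turn. By the inductive hypothesis applied to $\Hess(S',h^n)$, the restriction of $z$ to $\Sn^n$ is hit by a graded-ring expression in the $t_\bullet, x^{(n)}_\bullet, y^{(n)}_{\bullet}$; lifting these (using that $x_i$ and $y_{j,k}$ for $j\in\D_d(h^n)$ extend, as in Step~1 of Proposition~\ref{prop:main}) lets me assume $z=0$ on $\Sn^n$. Then, inductively supposing $z=0$ on $\Sn^{r+1}\sqcup\cdots\sqcup\Sn^n$, I analyze $\Sn^r$ by splitting on the value $h(r)$. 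When $h(r)\ge r+d+1$ there are at least $d+1$ edges from each $w\in\Sn^r$ into the already-vanishing blocks $\Sn^{r+1},\ldots,\Sn^{r+d+1}$ carrying the distinct labels $t_{w(r)}-t_{w(r+s)}$ for $1\le s\le d+1$; since $z(w)$ is divisible by each, and $\deg z = 2p \le 2d < 2(d+1)$, the element $z(w)$ must be divisible by a product of more linear forms than its degree allows, forcing $z=0$ on $\Sn^r$. This is the direct analogue of Case~1 in Proposition~\ref{prop:main}, now using the degree bound $p\le d$ against $d+1$ independent edge labels. When $h(r)=r+d$, i.e.\ $r\in\D_d(h)$, there are exactly $d$ edges into the vanishing blocks, and I expect $z\restriction_{\Sn^r}$ to be captured by a suitable combination of $y_{r,n}$ (whose value involves precisely $\prod_{s=1}^{d}(t_n-t_{w(r+s)})$) with coefficients that are themselves lower-degree polynomials in the $t_i,x_i$; a connectedness argument on $\Gamma(h^r)$ as in the Claim shows these coefficients are globally consistent.

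The main obstacle will be the $h(r)=r+d$ case: unlike the $d=1$ situation of Proposition~\ref{prop:main}, the coefficient $c_w$ multiplying $y_{r,n}$ is no longer forced to be a single integer but may be a polynomial of degree $2(p-d)\le 0$, so for $p=d$ it is again constant, but the bookkeeping of which monomials in the generators reproduce $z$ on $\Sn^r$ while remaining consistent across the connected components of $\Gamma(h^r)$ requires care. Here I would lean on Lemma~\ref{lemm:y+y*} and the complementary elements $y^*_{i,k}$ to see that the ``missing'' directions below the diagonal do not introduce generators outside the claimed list, and on the fact (from Lemma~\ref{lemm:6-1}) that the Betti numbers match, so no further generators can survive. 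Once $z$ has been reduced to $0$ on all of $\Sn$ by subtracting a graded-ring expression in $t_i,x_i,y_{j,k}$, the injectivity of the restriction map to $\Hess(S,h)^T$ finishes the argument.
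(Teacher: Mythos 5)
Your overall strategy — induction on $n$ with $d$ allowed to drop, peeling the blocks $\Sn^n,\Sn^{n-1},\dots$, killing a block by divisibility when there are more than $d$ edges into already-vanishing blocks, and capturing the case $h(r)=r+d$ by a constant multiple of $y_{r,n}$ — is the paper's strategy, and the two cases you treat are argued correctly (your divisibility case is in fact the exact analogue of the paper's Step 4). But your case analysis on $h(r)$ is not exhaustive, and the omitted cases are precisely where the paper's proof does its hardest work. The hypothesis $h(j)\ge j+d$ is imposed only for $j\in[n-d]$; for $n-d<r\le n-1$ one has $h(r)=n<r+d$, so neither ``$h(r)\ge r+d+1$'' nor ``$h(r)=r+d$'' holds. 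Since $d\ge 2$ such blocks exist, and your induction stalls immediately after Step 1, at $r=n-1$. On such a block, $z(w)$ for $w\in\Sn^r$ is divisible only by $\prod_{\ell=r+1}^{n}(t_{w(\ell)}-t_n)$, a product of $n-r\le d-1$ linear forms, which does not force $z(w)=0$; and since $r\notin\D_d(h)$ there is no degree-$2d$ generator $y_{r,k}$ to subtract. The missing idea (the paper's Steps 2 and 3) is to divide: write $z(w)=\bigl(\prod_{\ell=r+1}^{n}(t_{w(\ell)}-t_n)\bigr)g(w)$, expand $g$ in powers of $t_n$, prove (the Claim) that each coefficient $g_\ell$ satisfies the GKM relations of $\Gamma(h^r)$ and hence lies in $H^*_T(\Hess(S',h^r))$, and apply the inductive hypothesis to $g_\ell$, whose degree is at most $2(d-1)$. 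For $r=n-1$ this is genuinely delicate: when $h(n-d-1)=n-1$, the function $h^{n-1}$ satisfies only the weaker bound with $d-1$, so the induction produces the degree-$2(d-1)$ classes $y^{(n-1)}_{n-d-1,k}$ of $\Hess(S',h^{n-1})$, and one must invoke the complementary classes $y^*_{n,k}$ and Lemma~\ref{lemm:y+y*} to convert the resulting expression back into the claimed generators $t_\bullet,x_\bullet,y_\bullet$ upstairs. None of this appears in your plan; your mention of $y^*_{i,k}$ and Lemma~\ref{lemm:y+y*} is attached to the case $h(r)=r+d$, where the paper needs neither.

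Moreover, your fallback — that by Lemma~\ref{lemm:6-1} ``the Betti numbers match, so no further generators can survive'' — cannot close this gap. Rank equality is used in the paper (proof of Theorem~\ref{theo:H2d}, and similarly Theorem~\ref{theo:H2}) to upgrade an already-surjective map between torsion-free modules of equal rank to an isomorphism, i.e.\ to control relations; it cannot establish surjectivity, and generation up to degree $2d$ is exactly what Lemma~\ref{lemm:6-2} asserts. So the blocks $\Sn^r$ with $n-d<r<n$ must be handled by the divide-and-induct argument above, not by dimension counting.
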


\begin{proof}
We prove the lemma by induction on $n+d$ similarly to the proof of Lemma~\ref{lemm:6-1}.
When $(n,d)=(3,2)$, $\Hess(S,h)=\flag(3)$ and the lemma holds since $H_T^*(\flag(n))$ is generated by $t_i,x_i$ $(i\in [n])$.

Suppose that $n+d\ge 6$ and the lemma holds for any pair $(n',d')$ such that $n'+d'<n+d$.
Similarly to the proof of Proposition~\ref{prop:main}, we consider the decomposition
\[
	\Sn=\Sn^1\sqcup \Sn^2\sqcup\cdots\sqcup \Sn^n\qquad (\Sn^j=\{w\in\Sn\mid w(j)=n\}).
\]
Let $z$ be an arbitrary element of $H^{2p}_T(\Hess(S,h))$ for $p\le d$.

{\bf Step 1.} By the same reasoning as Step 1 in the proof of Proposition~\ref{prop:main}, we may assume $z=0$ on $\Sn^n$ by subtracting an appropriate polynomial in $t_\bullet, x_\bullet, y_{\bullet}$.

{\bf Step 2.} Suppose that $z=0$ on $\Sn^n$.
We will show that $z$ minus an appropriate polynomial of $t_\bullet, x_\bullet, y_\bullet$ vanishes on $\Sn^{n-1}\sqcup\Sn^n$.

Since $h(n-1)=n$, each vertex $w\in \Sn^{n-1}$ is connected to the vertex $w\cdot(n,n-1)$ of $\Sn^n$ by an edge of the labeled graph $\Gamma(h)$.
Since $z=0$ on $\Sn^n$, $z(w)$ must be divisible by the label $t_{w(n)}-t_{w(n-1)}=t_{w(n)}-t_{n}$ on the edge.
Therefore, there is a homogeneous element $g(w)\in \Z[t_1,\dots,t_n]$ of degree $2(p-1)$ such that
\begin{equation} \label{eq:zwr}
	z(w)=(t_{w(n)}-t_{n})g(w)\quad\text{for } w\in \Sn^{n-1}.
\end{equation}
We express
\begin{equation} \label{eq:gwell}
	g(w)=\sum_{\ell=0}^{p-1}g_\ell(w) t_n^\ell
\end{equation}
with homogeneous polynomial $g_\ell(w)$ in $\Z[t_1,\dots,t_{n-1}]$ of degree $2(p-1-\ell)$.

\smallskip
\noindent
{\bf Claim.}
If $v,w\in \Sn^{n-1}$ are joined by an edge of the labeled graph $\Gamma(h^{n-1})$, i.e.~ 
$v=w\cdot(i,j)$ for some transposition $(i,j)$ with $j<i\le h(j)$, $j\not=n-1$ and $i\not=n-1$, then
\[
	g_\ell(v)\equiv g_\ell(w) \mod{t_{w(i)}-t_{w(j)}}.
\]

\smallskip
\noindent
{\it Proof of Claim}.
Since $z$ is an element of $H^*_T(\Hess(S,h))$, it satisfies the congruence relation
\begin{equation} \label{eq:zvw}
	z(v)\equiv z(w)\mod{t_{w(i)}-t_{w(j)}}.
\end{equation}
Since $v=w\cdot(i,j)$, we have $v(i)=w(j)$, $v(j)=w(i)$ and $v(s)=w(s)$ for $s\not=i,j$.
Moreover, $w(i)$ and $w(j)$ are not equal to $n$ because $i$ and $j$ are not equal to $n-1$ and $w\in \Sn^{n-1}$.
Therefore,
\[
	t_{v(n)}-t_{n}\equiv t_{w(n)}-t_{n}\not\equiv 0\mod{t_{w(i)}-t_{w(j)}}.
\]
This together with \eqref{eq:zwr}, \eqref{eq:gwell}, and \eqref{eq:zvw} implies the congruence relation in the claim.

\smallskip
By the claim above, each $g_\ell$ is an element of $H^*_T(\Hess(S',h^{n-1}))$ by Proposition~\ref{prop:GKM}.
Since $\ell\ge 0$ and $p\le d$, we have
\begin{equation} \label{eq:gell_degree}
	\text{the degree of $g_\ell$}=2(p-1-\ell)\le 2(d-1).
\end{equation}
We note that $h(n-d-1)=n$ or $n-1$ by the assumption $h(j)\ge j+d$ for any $j\in [n-d]$.
Now we take two cases according to the value of $h(n-d-1)$.

\smallskip
Case 1. The case where $h(n-d-1)=n$.
In this case, $h^{n-1}(j)\ge j+d$ for any $j\in [n-d-1]$.
Therefore, by the induction assumption and \eqref{eq:gell_degree}, any $g_\ell$ can be written as a polynomial in $t_i$, $x_i^{(n-1)}$ $(i\in [n-1])$, where for $w\in \Sn^{n-1}$,
\[
	x_i^{(n-1)}(w)=
	\begin{cases}
		x_i(w)\quad &(i\le n-2)\\
		x_{n}(w)\quad &(i=n-1).
	\end{cases}
\]
This shows that there is a polynomial $G_\ell$ in $t_i$, $x_i$'s whose restriction to $\Sn^{n-1}$ agrees with $g_\ell$.
Therefore, it follows from \eqref{eq:zwr} that
\[
	z=(x_n-t_n)\sum_{\ell=0}^{p-1}G_\ell t_n^\ell \quad\text{on }\Sn^{n-1}.
\]
Both sides above vanish on $\Sn^n$, so they agree on $\Sn^{n-1}\sqcup \Sn^n$.
Therefore, subtracting the right hand side above from $z$, we may assume $z=0$ on $\Sn^{n-1}\sqcup \Sn^n$.

\smallskip
Case 2.
The case where $h(n-d-1)=n-1$.
In this case, $h^{n-1}(j)\ge j+(d-1)$ for any $j\in [n-1-(d-1)]$ and $\D_{d-1}(h^{n-1})=\{n-d-1\}$.
Therefore, by the induction assumption and \eqref{eq:gell_degree}, any $g_\ell$ can be written as a polynomial in $t_i, x_i^{(n-1)}$ and $y^{(n-1)}_{n-d-1,k}$, where $k\in [n-1]$ and
\begin{equation*} \label{eq:ynd1}
	y_{n-d-1,k}^{(n-1)}(w)=y_{n-d-1,k}(w)/(t_k-t_{w(n-1)})\quad\text{ for $w\in \Sn^{n-1}$}.
\end{equation*}
By Lemma~\ref{lemm:y+y*}, we may use $y_{n-1,k}^{*(n-1)}$ instead of $y_{n-d-1,k}^{(n-1)}$, where
\begin{equation*} \label{eq:y*n-1}
	y_{n-1,k}^{*(n-1)}=y^*_{n,k}/(t_k-t_{w(n-1)}).
\end{equation*}
We note that since
\[
	\deg g_\ell=2(p-1-\ell),\quad \deg y_{n-1,k}^{*(n-1)}=2(d-1),\quad p\le d,
\]
$y_{n-1,k}^{*(n-1)}$ does not appear in the polynomial expression of $g_\ell$ unless $p=d$ and $\ell=0$.
Therefore, it follows from \eqref{eq:zwr} and \eqref{eq:gwell} that we can write
\begin{equation} \label{eq:zw2}
\begin{split}
	z(w)&=(t_{w(n)}-t_n)\left(\sum_{k=1}^{n-1}c_ky_{n-1,k}^{*(n-1)}(w)+\sum_{\ell=0}^{p-1}f_\ell(w)t_n^\ell\right)\\
	&=\sum_{k=1}^{n-1}c_k(t_{w(n)}-t_n)y_{n-1,k}^{*(n-1)}(w)+(t_{w(n)}-t_n)\sum_{\ell=0}^{p-1}f_\ell(w)t_n^\ell
\end{split}
\end{equation}
where $c_k\in \Z$ and $f_\ell$ is a polynomial in $t_i, x_i^{(n-1)}$'s $(i\in [n-1])$.

Similarly to Case 1, $f_\ell\in H^*(\Hess(S',h^{n-1}))$ in \eqref{eq:zw2} is the image of some $F_\ell\in H^*(\Hess(S,h))$.
Although $y_{n-1,k}^{*(n-1)}$ may not be in the image of the restriction map, we have
\begin{align*}
	(t_{w(n)}-t_n)y_{n-1,k}^{*(n-1)}(w)&=
	\begin{cases}
		\prod_{\ell=h^*(n)}^{n}(t_k-t_{w(\ell)})\quad &(k=w(n))\\
		0\quad&(\text{otherwise})
	\end{cases}\\
	&=y^*_{n,k}(w)\quad
\end{align*}
for $w\in\Sn^{n-1}$ (so $t_n=t_{w(n-1)}$), where $h^*(n)=n-d$ because $(n-d-1)=n-1$ and $h(j)=n$ for $j\ge n-d$.
This observation and \eqref{eq:zw2} show that
\begin{equation} \label{eq:z2}
	z=\sum_{k=1}^{n-1}c_ky^*_{n,k}+(x_n-t_n)\sum_{\ell=0}^{p-1}F_\ell t_n^\ell \quad \text{on $\Sn^{n-1}$}.
\end{equation}
Here, $z=0$ on $\Sn^n$ by assumption and the right hand side above also vanishes on $\Sn^n$.
Indeed, since $w(n)=n$ for $w\in \Sn^n$, we have $y^*_{n,k}(w)=0$ for $k\not=n$ and $(x_n-t_n)(w)=t_{w(n)}-t_n=0$.
Thus, the identity \eqref{eq:z2} holds on $\Sn^{n-1}\sqcup \Sn^n$.
This together with Lemm~\ref{lemm:y+y*} shows that $z$ minus an appropriate polynomial in $t_\bullet, x_\bullet, y_\bullet$ vanishes on $\Sn^{n-1}\sqcup \Sn^n$.

{\bf Step 3.} Suppose that $z=0$ on $\Sn^{r+1}\sqcup\cdots\sqcup\Sn^{n}$ for some $r$ with $n-d\le r\le n-2$.
Then, since $h(r)=n$, there are shaded boxes at positions $(r+1,r), (r+2,r), \dots,(n,r)$ in the configuration associated to $h$.
This means that each vertex $w\in \Sn^r$ is connected to $\Sn^\ell$ for $r+1\le \ell\le n$ by an edge with label $t_{w(\ell)}-t_{w(r)}=t_{w(\ell)}-t_n$.
Since $z=0$ on $\Sn^{r+1}\sqcup\cdots\sqcup\Sn^{n}$, $z(w)$ for $w\in \Sn^r$ must be divisible by $\prod_{\ell=r+1}^{n}(t_{w(\ell)}-t_n)$.
Therefore, there is a homogeneous element $g(w)\in \Z[t_1,\dots,t_n]$ such that
\begin{equation} \label{eq:zr}
z(w)=\left(\prod_{\ell=r+1}^{n}(t_{w(\ell)}-t_n)\right)g(w)\quad\text{for $w\in \Sn^r$}.
\end{equation}
The same argument as in the claim in Step 2 shows that the $g$ in \eqref{eq:zr} satisfies the congruence relation for the labeled graph $\Gamma(h^r)$.
Since $p\le d$ and $n-r\ge 2$, we have
\[
	\text{the degree of $g$}=2(p-n+r)\le 2(d-2).
\]
Moreover, $h^r(j)\ge j+(d-1)$ for any $j\in [n-1-(d-1)]$.
Therefore, by the induction assumption, $g$ can be expressed as a polynomial in $t_i$ and $x_i^{(r)}$ where
\[
	x_i^{(r)}(w):=\begin{cases} x_i(w) \quad&(i<r)\\
	x_{i+1}(w)\quad&(r\le i)\end{cases}
\]
for $w\in \Sn^r$.
Since $x_i^{(r)}$ is in the image of the restriction map from $H^*(\Hess(S,h))$ to $H^*(\Hess(S',h))$, there is an element $G\in H^*(\Hess(S,h))$ whose restriction to $H^*(\Hess(S',h^r))$ agrees with $g$.
It follows from \eqref{eq:zr} that
\begin{equation} \label{eq:zrr}
	z=\left(\prod_{\ell=r+1}^{n}(x_\ell-t_n)\right)G\quad \text{on } \Sn^r.
\end{equation}
Here, $z=0$ on $\Sn^{r+1}\sqcup\cdots\sqcup\Sn^{n}$ by assumption and the right hand side above also vanishes on $\Sn^{r+1}\sqcup\cdots\sqcup\Sn^{n}$.
Indeed, since $x_\ell(w)=t_{w(\ell)}=t_n$ for $w\in \Sn^\ell$, $\prod_{\ell=r+1}^{n}(x_\ell-t_n)(w)=0$ for $w\in \Sn^{r+1}\sqcup\cdots\sqcup\Sn^{n}$.
Thus, the identity \eqref{eq:zrr} holds on $\Sn^r\sqcup\cdots\sqcup\Sn^{n}$, so that $z$ minus an appropriate polynomial in $t_\bullet, x_\bullet, y_\bullet$ vanishes on $\Sn^r\sqcup \cdots \sqcup \Sn^n$.

{\bf Step 4.}
Suppose that $z=0$ on $\Sn^{r+1}\sqcup\cdots\sqcup\Sn^{n}$ for some $r$ with $1\le r\le n-d-1$.
Then, similarly to Step 3, $z(w)$ for $w\in \Sn^r$ must be divisible by $\prod_{\ell=r+1}^{h(r)}(t_n-t_{w(\ell)})$.
Here, the degree of $z$ is $2p\le 2d$ and $h(r)-r\ge d$, so $z(w)=0$ unless $p=d$ and $h(r)=r+d$.
When $p=d$ and $h(r)=r+d$, we have
\begin{equation} \label{eq:z4}
z(w)=c\left(\prod_{\ell=r+1}^{r+d}(t_n-t_{w(\ell)})\right)\quad \text{for } w\in \Sn^r,
\end{equation}
where $c\in \Z$.

Since $r\le n-d-1$ by assumption, $h(r)=r+d<n$.
Therefore, $r\in \D_d(h)$ so that we have an element $y_{r,k}\in H^{2d}_T(\Hess(S,h))$ for any $k\in [n]$.
We take $k=n$.
Since $h(r)=r+d$, we have
\[
	y_{r,n}(w)=\begin{cases}\prod_{\ell=r+1}^{r+d}(t_n-t_{w(\ell)})\quad&(n\in \{w(1),\dots,w(r)\})\\
	0\quad&(\text{otherwise})\end{cases}
\]
by definition.
This together with \eqref{eq:z4} shows that
\[
	z=cy_{r,n} \quad\text{on } \Sn^r.
\]
Here, $z=0$ on $\Sn^{r+1}\sqcup\cdots\sqcup\Sn^n$ by assumption and the right hand side above also vanishes because $y_{r,n}(w)=0$ if $n\in \{w(r+1),\dots,w(n)\}$ by definition.
Therefore, $z$ minus $cy_{r,n}$ vanishes on $\Sn^r\sqcup\cdots\sqcup\Sn^n$.

This completes the induction step and the lemma has been proven.
\end{proof}

\subsection{Proof of Theorem~\ref{theo:H2d}}
Under these preparations, we prove Theorem~\ref{theo:H2d}.
When $p<d$, the restriction map
\[
	\iota^*\colon H^{2p}(\flag(n))\to H^{2p}(\Hess(S,h))
\]
is surjective by Lemma~\ref{lemm:6-2} and indeed an isomorphism by Lemma~\ref{lemInversions} (or Lemma~\ref{lemm:6-1}).

When $p=d$, we consider the homomorphism
\[
	\Phi\colon \iota^*(H^{2d}(\flag(n)))\oplus\Z\langle Y_{j,k}\mid j\in\D_d(h), k\in [n]\rangle\to H^{2d}(\Hess(S,h))
\]
sending $Y_{j,k}$ to $y_{j,k}$.
The map $\Phi$ is surjective by Lemma~\ref{lemm:6-2} and $\sum_{k=1}^nY_{j,k}-y_j$ for $j\in \D_d(h)$ are in the kernel of $\Phi$ by \eqref{eq:yjksum}.
Therefore, the map $\Phi$ induces a surjective homomorphism
\begin{align*}
	\bar{\Phi}\colon \Big(\iota^*&(H^{2d}(\flag(n)))\oplus\Z\langle Y_{j,k}\mid j\in\D_d(h), k\in [n]\rangle\Big)/\Z\langle \sum_{k=1}^nY_{j,k}-y_j \rangle\\
	&\to H^{2d}(\Hess(S,h)).
\end{align*}
Here, the rank of the source module is at most the rank of the target module by Lemma~\ref{lemm:6-1}.
Moreover, one can easily see that the source module is torsion free and we know that $H^*(\Hess(S,h))$ is also torsion free.
Thus, the surjective homomorphism $\bar\Phi$ must be an isomorphism, proving the theorem.

\medskip
\noindent
{\bf Acknowledgement.} The work of first and second authors was done within the framework of the HSE University Basic Research Program.

\end{document}